\newtheorem{lemma}{Lemma}
\newtheorem{proposition}{Proposition}[section]
\newtheorem{theorem}{Theorem}[section]
\theoremstyle{definition}
\newtheorem{definition}{Definition}[section]
\newtheorem{remark}{Remark}[section]
\def \p {\partial}
\def \O {\varOmega}
\def \e {\tau}
\def \r {\rho}
\def \d {\cdot}
\def \g {\Gamma}
\def \n {\nabla}
\def \nh {\nabla_h}
\def \la {\Delta}
\newcommand{\abs}[1]{\left|#1\right|}
\newcommand{\xkh}[1]{\left(#1\right)}
\newcommand{\zkh}[1]{\left[#1\right]}
\newcommand{\dkh}[1]{\left\{#1\right\}}
\newcommand{\ds}[1]{\int^t_0#1ds}
\newcommand{\dt}[1]{\int^r_0#1dt}
\newcommand{\dz}[1]{\int^1_{-1}#1dz}
\newcommand{\dk}[1]{\int^z_0#1d\xi}
\newcommand{\mm}[1]{\int_{M}#1dxdy}
\newcommand{\oo}[1]{\int_{\O}#1dxdydz}
\newcommand{\norm}[1]{\left\lVert#1\right\rVert}
\newcommand{\rt}[1]{\int^r_0\int_{\O}#1dxdydzdt}
\newcommand{\ty}[1]{\int^{\infty}_0\int_{\O}#1dxdydzdt}
\begin{document}
\title{On the rigorous mathematical derivation for the viscous primitive equations with density stratification}

\author{{Xueke Pu,~{Wenli Zhou}} \\[1ex]
\normalsize{\it School of Mathematics and Information Sciences,}\\
\normalsize{\it Guangzhou University,~Guangzhou 510006,~China}\\
\normalsize{Email:~puxueke@gmail.com,~wywlzhou@163.com}\\}
\date{}

\maketitle
\begin{abstract}
In this paper,~we rigorously derive the governed equations describing the motion of stable stratified fluid,~from the mathematical point of view.~Specially,~we prove that the scaled Boussinesq equations strongly converge to the viscous primitive equations with density stratification as the aspect ration parameter goes to zero,~and the rate of convergence is of the same order as the aspect ratio parameter.~Moreover,~in order to obtain this convergence result,~we also establish the global well-posedness of strong solutions to the viscous primitive equations with density stratification.
\end{abstract}
\begin{center}
 \begin{minipage}{120mm}
   {\small {\bf AMS Subject Classification:~35Q35,~35Q86,~86A05,~86A10}}
\end{minipage}
\end{center}
\begin{center}
\begin{minipage}{120mm}
   {\small {{\bf Key Words:~Boussinesq equations;~Primitive equations;~Density stratification;~Hydrostatic approximation;~Strong convergence}}}
\end{minipage}
\end{center}

\section{Introduction}
The primitive equations are considered as the fundamental model in geophysical flows (see, e.g., \cite{wm1986,jp1987,ds1996,am2003,gk2006}).~For large-scale ocean dynamics,~an important feature is that the vertical scale of ocean is much smaller than the horizontal scale,~which means that we have to use the hydrostatic approximation to simulate the motion of ocean in the vertical direction.~Owing to this fact and the high accuracy of hydrostatic approximation,~the primitive equations of ocean dynamics can be formally derived from the Boussinesq equations (see \cite{rt1992,ct2007}).

The small aspect ratio limit from the Navier-Stokes equations to the primitive equations was studied first by Az\'{e}rad-Guill\'{e}n\cite{pa2001}~in a weak sense,~then by Li-Titi\cite{lt2019}~in a strong sense with error estimates,~and finally by Furukawa \textit{et al.}~\cite{kf2020}~in a strong sense but under relaxing the regularity on the initial condition.~Subsequently, the strong convergence of solutions of the scaled Navier-Stokes equations to the corresponding ones of the primitive equations with only horizontal viscosity was obtained by Li-Titi-Yuan\cite{yu2022}.~Furthermore, the rigorous justification of the hydrostatic approximation from the scaled Boussinesq equations to the primitive equations with full viscosity and diffusivity was obtained by Pu-Zhou\cite{pz2021}.

From a physical point of view,~fluid flow is strongly influenced by effect of stratification~(see, e.g.,~\cite{jp1987,am2003,gk2006}).~An important observation for effect of stratification is that the density of a fluid changes with depth.~In some mathematical studies,~considering the hydrodynamic equations with density stratification term can often obtain better results~(see, e.g.,\cite{ct2012,cc2014,jl2014,es2016,cc2017,es2020}).~These two facts show that density stratification term is of great significance both physically and mathematically.~Therefore,~the aim of this paper is to derive rigorously the governed equations describing the motion of stable stratified fluid,~i.e.,~the viscous primitive equations with density stratification,~from the mathematical point of view.

Let~$\O_\e=M\times(-\e,\e)$~be a $\e$-dependent domian,~where~$M$~is smooth bounded domain in~$\mathbb R^2$.~Here,~$\e=H/L$~is called the aspect ratio,~measuring the ratio of the vertical and horizontal scales of the motion,~which is usually very small.~Say,~for large-scale ocean circulation,~the ratio~$\e\sim 10^{-3}\ll 1$.

Denote by~$\nh=(\p_x,\p_y)$~the horizontal gradient operator.~Then the horizontal Laplacian operator~$\la_h$ is given by
\begin{equation*}
  \la_h=\nh \d \nh=\p_{xx}+\p_{yy}.
\end{equation*}Let us consider the anisotropic Boussinesq equations defined on~$\O_\e$
\begin{equation}\label{eq:udn}
\begin{cases}
  \p_t u+(u \d \n)u+\n\pi+\frac{g\varrho}{\r_b} \vec{k}=\mu_h \la_h u+\mu_z \p_{zz}u,\\
  \p_t \varrho+u \d \n\varrho=\kappa_h\la_h\varrho+\kappa_z\p_{zz}\varrho,\\
  \n \d u=0,
\end{cases}
\end{equation}where the three dimensional velocity field~$u=(v,w)=(v_1,v_2,w)$,~the pressure~$\pi$~and density~$\varrho$~are the unknowns.~$g$~is the gravitational acceleration and~$\r_b$~is the reference constant density.~$\vec{k}=(0,0,1)$~is unit vector pointing to the~$z$-direction.~$\mu_h$~and~$\mu_z$~represent the horizontal and vertical viscosity coefficients respectively,~while $\kappa_h$~and~$\kappa_z$~represent the horizontal and vertical heat conduction coefficients respectively.

For simplicity,~the reference constant density~$\r_b$~is set to be~$\r_b=1$.~In fact,~the anisotropic Boussinesq equations~(\ref{eq:udn})~have an elementary exact solution~$(u,\pi,\varrho)=(0,\bar{p}(z),\bar{\varrho}(z))$~satisfying the hydrostatic approximation
\begin{equation*}
  \frac{d\bar{p}(z)}{dz}+g\bar{\varrho}(z)=0.
\end{equation*}Assume that
\begin{gather*}
  p(x,y,z,t)=\pi(x,y,z,t)-\bar{p}(z),\\
  \r(x,y,z,t)=\varrho(x,y,z,t)-\bar{\varrho}(z).
\end{gather*}Then the anisotropic Boussinesq equations~(\ref{eq:udn})~become
\begin{equation}\label{eq:muh}
\begin{cases}
  \p_t v+(v \d \nh)v+w \p_z v+\nh p=\mu_h\la_h v+\mu_z\p_{zz}v,\\
  \p_t w+v \d \nh w+w\p_z w+\p_z p+g\r=\mu_h\la_h w+\mu_z\p_{zz}w,\\
  \p_t \r+v \d \nh\r+w\p_z\r+\xkh{\frac{d\bar{\varrho}(z)}{dz}}w
  =\kappa_h\la_h\r+\kappa_z\p_{zz}\r+\kappa_z\frac{d^2\bar{\varrho}(z)}{dz^2},\\
  \nh \d v+\p_z w=0.
\end{cases}
\end{equation}Let~$N=\xkh{-g\frac{d\bar{\varrho}(z)}{dz}}^{1/2}$.~If~$N>0$,~then~$N$~is called the buoyancy or Brunt-V\"ais\"al\"a frequency.~When~$\frac{d\bar{\varrho}(z)}{dz}<0$,~the density decreases with height and lighter fluid is above heavier fluid,~which is called stable stratification.

Firstly,~we transform the anisotropic Boussinesq equations~(\ref{eq:muh}),~defined on the~$\e$-dependent domain~$\O_\e$,~to the scaled Boussinesq equations defined on a fixed domain.~To this end,~we introduce the following new unknowns with subscript~$\e$
\begin{gather*}
  u_{\e}=(v_\e,w_\e),~v_\e(x,y,z,t)=v(x,y,\e z,t),\\
  w_\e(x,y,z,t)=\frac{1}{\e}w(x,y,\e z,t),~p_\e(x,y,z,t)=p(x,y,\e z,t),\\
  \r_\e(x,y,z,t)=(g\e)\r(x,y,\e z,t),~\bar{p}_\e(z)=\bar{p}(\e z),~\bar{\varrho}_\e(z)=(g\e^2)\bar{\varrho}(\e z),
\end{gather*}for any~$(x,y,z)\in\O=:M\times(-1,1)$~and for any~$t\in(0,\infty)$.~Then the last two scalings allow us to write the pressure and density non-dimensionally as
\begin{equation*}
  \bar{p}_\e(z)+p_\e(x,y,z,t)=\bar{p}(\e z)+p(x,y,\e z,t)=\pi(x,y,\e z,t)
\end{equation*}and
\begin{equation*}
  \bar{\varrho}_\e(z)+\e\r_\e(x,y,z,t)=(g\e^2)(\bar{\varrho}(\e z)+\r(x,y,\e z,t))=(g\e^2)\varrho(x,y,\e z,t),
\end{equation*}respectively.

Suppose that~$\mu_h=\kappa_h=1$~and~$\mu_z=\kappa_z=\e^2$.~Under these scalings,~the anisotropic Boussinesq equations~(\ref{eq:muh})~defined on~$\O_\e$~can be written as the following scaled Boussinesq equations
\begin{equation}\label{eq:bve}
\begin{cases}
  \p_t v_\e+(v_\e \d \nh)v_\e+w_\e \p_z v_\e+\nh p_\e=\la_h v_\e+\p_{zz}v_\e,\\
  \e(\p_t w_\e+v_\e \d \nh w_\e+w_\e \p_z w_\e)+\frac{1}{\e}(\p_z p_\e+\r_\e)=\e\la_h w_\e+\e\p_{zz}w_\e,\\
  \p_t \r_\e+v_\e \d \nh \r_\e+w_\e \p_z \r_\e+\frac{1}{\e}w_\e \frac{d\bar{\varrho}_\e}{dz}=\la_h\r_\e+\p_{zz}\r_\e+\frac{1}{\e}\frac{d^2\bar{\varrho}_\e}{dz^2},\\
  \nh \d v_\e+\p_z w_\e=0,
\end{cases}
\end{equation}defined on the fixed domain~$\O$.

When the fluid is steadily stratified,~we can assume for simplicity that~$\bar\varrho(z)=1-(1/g)N^2z$~for some positive constant~$N^2$,~where~$N$~represents the strength of the stable stratification.~This assumption leads to~$\bar\varrho_\e(z)=(g\e^2)\bar\varrho(\e z)=g\e^2-\e^3 N^2 z$,~and hence the third equation of the scaled Boussinesq equations~(\ref{eq:bve})~becomes
\begin{equation*}
  \p_t \r_\e+v_\e \d \nh\r_\e+w_\e \p_z \r_\e-\e^2 N^2 w_\e=\la_h \r_\e+\p_{zz}\r_\e.
\end{equation*}Set~$\e^2 \d N^2=1$,~i.e.,~$N \sim 1/\e$,~which means that the stratification effect is very strong.~In such a case,~the scaled Boussinesq equations~(\ref{eq:bve})~can be rewritten as
\begin{equation}\label{eq:ptv}
\begin{cases}
  \p_t v_\e-\la v_\e+(v_\e \d \nh)v_\e+w_\e \p_z v_\e+\nh p_\e=0,\\
  \e^2\xkh{\p_tw_\e-\la w_\e+v_\e \d \nh w_\e+w_\e \p_z w_\e}+\p_zp_\e+\r_\e=0,\\
  \p_t \r_\e-\la \r_\e+v_\e \d \nh \r_\e+w_\e \p_z \r_\e-w_\e=0,\\
  \nh \d v_\e+\p_z w_\e=0.
\end{cases}
\end{equation}

Next,~we supply the scaled Boussinesq equations~(\ref{eq:ptv})~with the following boundary and initial conditions
\begin{gather}
  v_\e,w_\e,p_\e~\textnormal{and}~\r_\e~\textnormal{are periodic in}~x,y,z, \label{ga:are}\\
  (v_\e,w_\e,\r_\e)|_{t=0}=(v_0,w_0,\r_0), \label{ga:vew}
\end{gather}where~$(v_0,w_0,\r_0)$~is given.~Moreover,~we also equip the system~(\ref{eq:ptv})~with the following symmetry condition
\begin{equation}\label{eq:eve}
  v_\e,w_\e,p_\e~\textnormal{and}~\r_\e~\textnormal{are even,~odd,~even and odd with respect to}~z,~\textnormal{respectively}.
\end{equation}Noting that the above symmetry condition is preserved by the scaled Boussinesq equations~(\ref{eq:ptv}),~i.e.,~it holds provided that the initial data satisfies this symmetry condition.~Due to this fact,~throughout this paper,~we always suppose that the initial data satisfies
\begin{equation}\label{eq:xyz}
  v_0,w_0,\textnormal{and}~\r_0~\textnormal{are periodic in}~x,y,z,~\textnormal{and are even,~odd,~and odd in}~z,~\textnormal{respectively}.
\end{equation}

In this paper,~we will not distinguish in notation between spaces of scalar and vector-valued functions.~Namely,~we will use the same notation to denote both a space itself and its finite product spaces.~For convenience,~we denote by notation~$\norm{\d}_p$~and~$\norm{\d}_{p,M}$~the~$L^p(\O)$~norm and~$L^p(M)$~norm,~respectively.~Moreover,~since the scaled Boussinesq equations~(\ref{eq:ptv})~satisfy the symmetry condition~(\ref{eq:eve}),~it follows from the divergence-free condition that~$w_0$~is uniquely determined as
\begin{equation}\label{eq:yxi}
  w_0(x,y,z)=-\dk{\nh \d v_0(x,y,\xi)},
\end{equation}for any~$(x,y) \in M$~and~$z \in (-1,1)$.~Hence only the initial condition of $(v_\e,\r_\e)$ is given throughout the paper.

For the proof of the existence of weak solutions to the scaled Boussinesq equations~(\ref{eq:ptv}),~we refer to the work of Lions-Temam-Wang\cite[Part IV]{rt1992}.~Specifically,~for any initial data~$(u_0,\r_0)=(v_0,w_0,\r_0) \in L^2(\O)$,~with~$\n \d u_0=0$,~we can prove that there exists a global weak solution~$(v_\e,w_\e,\r_\e)$~of the scaled Boussinesq equations~(\ref{eq:ptv}),~subject to boundary and initial conditions~(\ref{ga:are})-(\ref{ga:vew})~and symmetry condition~(\ref{eq:eve}).~Moreover,~by the similar argument as Lions-Temam-Wang~\cite[Part IV]{rt1992},~we can also show that it has a unique local strong solution~$(v_\e,w_\e,\r_\e)$~for initial data~$(u_0,\r_0)=(v_0,w_0,\r_0) \in H^1(\O)$,~with~$\n \d u_0=0$.~The weak solutions of the scaled Boussinesq equations~(\ref{eq:ptv})~are defined as follows.

\begin{definition}
Given~$(u_0,\r_0)=(v_0,w_0,\r_0) \in L^2(\O)$,~with~$\n \d u_0=0$.~We say that a space periodic function~$(v_\e,w_\e,\r_\e)$~is a weak solution of the system~(\ref{eq:ptv}),~subject to boundary and initial conditions~(\ref{ga:are})-(\ref{ga:vew})~and symmetry condition~(\ref{eq:eve}),~if\\
(i)~$(v_\e,w_\e,\r_\e) \in C([0,\infty);L^2(\O)) \cap L^2_{loc}([0,\infty);H^1(\O))$~and\\
(ii)~$(v_\e,w_\e,\r_\e)$~satisfies the following integral equality
\begin{flalign*}
  &\ty{\bigg\{(-v_\e \d \p_t \varphi_h-\e^2 w_\e \p_t\varphi_3-\r_\e \p_t \psi+\r_\e \varphi_3-w_\e \psi)\\
  &\quad+\zkh{\n v_\e:\n \varphi_h+\e^2\n w_\e \d \n \varphi_3+\n \r_\e \d \n \psi}\\
  &\quad+\zkh{(u_\e \d \n)v_\e \d \varphi_h+\e^2(u_\e \d \n w_\e)\varphi_3+(u_\e \d \n \r_\e)\psi}\bigg\}}\\
  &=\oo{\xkh{v_0 \d \varphi_h(0)+\e^2w_0\varphi_3(0)+\r_0\psi(0)}},
\end{flalign*}for any spatially periodic function~$(\varphi,\psi)=(\varphi_h,\varphi_3,\psi)$,~with~$\varphi_h=(\varphi_1,\varphi_2)$,~such that~$\n \d \varphi=0$~and~$(\varphi,\psi) \in C^{\infty}_c(\overline{\O}\times[0,\infty))$.
\end{definition}

\begin{remark}\label{re:lh}
Similar to the theory of three-dimensional Navier-Stokes equations,~e.g.,~see Temam\cite[Ch.III,~Remark 4.1]{rt1977}~and~Robinson \emph{et al.}\cite[Theorem~4.6]{jc2016},~we can prove that~$(v_\e,w_\e,\r_\e)$~satisfies the following energy inequality
\begin{flalign}
  &\frac{1}{2}\xkh{\norm{v_\e(t)}^2_2+\e^2\norm{w_\e(t)}^2_2+\norm{\r_\e(t)}^2_2}\nonumber\\
  &\qquad\quad+\ds{\xkh{\norm{\n v_\e}^2_2+\e^2\norm{\n w_\e}^2_2+\norm{\n \r_\e}^2_2}}\nonumber\\
  &\qquad\leq\frac{1}{2}\xkh{\norm{v_0}^2_2+\e^2\norm{w_0}^2_2+\norm{\r_0}^2_2}.\label{fl:vwr}
\end{flalign}for~a.e.~$t \in [0,\infty)$,~as long as the weak solution~$(v_\e,w_\e,\r_\e)$~is obtained by Galerkin method.
\end{remark}

In consequence,~this paper is to study the the small aspect ratio limit for the system~(\ref{eq:ptv}).~In other words,~when the aspect ratio~$\e \rightarrow 0$,~we are going to prove that the scaled Boussinesq equations~(\ref{eq:ptv})~converge to the following viscous primitive equations with density stratification
\begin{equation}\label{eq:ptl}
\begin{cases}
  \p_t v-\la v+(v \d \nh)v+w \p_z v+\nh p=0,\\
  \p_z p+\r=0,\\
  \p_t \r-\la \r+v \d \nh \r+w \p_z \r-w=0,\\
  \nh \d v+\p_z w=0,
\end{cases}
\end{equation}in a suitable sense,~where the density stratification term~$w$~in the third equation of system~(\ref{eq:ptl})~provides additional dissipation for this system.~Moreover,~the resulting system~(\ref{eq:ptl})~satisfies the same boundary and initial conditions~(\ref{ga:are})-(\ref{ga:vew})~and symmetry condition~(\ref{eq:eve})~as the system~(\ref{eq:ptv}).

Next we want to recall some results concerning the primitive equations.~The global existence of weak solutions of the full primitive equations was first given by Lions-Temam-Wang\cite{rt1992,jl1992,sw1995},~but the question of uniqueness to this mathematical model is still unknown except for some special cases\cite{db2003,tt2010,ik2014,jl2017,ju2017}.~Furthemore,~the existence and uniqueness of strong solutions of this mathematical model in different setting are due to Cao-Titi\cite{ct2007},~Kobelkov\cite{gm2006},~Kukavica-Ziane\cite{ik2007,mz2007},~Hieber-Kashiwabara\cite{mh2016},~Hieber~\textit{et al.}\cite{ah2016},~as well as Giga~\textit{et al.}\cite{ym2020}. Subsequently,~the study of the global strong solutions to the primitive equations is naturally carried out in the cases of partial dissipation.~More details on these cases can be found in the work of Cao-Titi\cite{ct2012},~Fang-Han\cite{dy2020},~Li-Yuan\cite{li2022},~and Cao-Li-Titi\cite{cc2014,jl2014,es2016,cc2017,es2020}.~However,~the inviscid primitive equations with or without rotation is known to be ill-posed in Sobolev spaces,~and its smooth solutions may develop singularity in finite time,~see Renardy\cite{mr2009},~Han-Kwan and Nguyen\cite{dh2016},~Ibrahim-Lin-Titi\cite{si2021},~Wong\cite{tk2015},~and Cao~\textit{et al.}\cite{cc2015}.

The rest of this paper is organized as follows.~Our main results are stated in Section 2.~In Section 3,~we establish the global well-posedness of strong solutions to the viscous primitive equations with density stratification~(\ref{eq:ptl}).~The proofs of Theorem~\ref{th:I1t}~and~\ref{th:I12}~are presented in Section 4 and Section 5,~respectively.~Some auxiliary lemmas frequently used in the proof are collected in Appendix.

\section{Main results}
Now we are to state the main results of this paper.~In order to obtain the following strong convergence results,~i.e.,~Theorem~\ref{th:I1t}~and~\ref{th:I12},~we firstly establish the global well-posedness of strong solutions to the viscous primitive equations with density stratification~(\ref{eq:ptl}).

\begin{theorem}\label{th:vrt}
Suppose that a periodic function pair~$(v_0,\r_0) \in H^1(\O)$,~with
\begin{equation*}
  \dz{\nh \d v_0(x,y,z)}=0,~\oo{v_0(x,y,z)}=0,~\textnormal{and}~\oo{\r_0(x,y,z)}=0.
\end{equation*}Then,~for any~$T>0$,~there exists a unique strong solution~$(v,\r)$~depending continuously on the initial data to the system~(\ref{eq:ptl})~on the time interval~$[0,T]$,~subject to boundary and initial conditions~(\ref{ga:are})-(\ref{ga:vew})~and symmetry condition~(\ref{eq:eve}),~such that~$(v,\r) \in C([0,T];H^1(\O)) \cap L^2([0,T];H^2(\O))$~and~$(\p_t v,\p_t \r) \in L^2([0,T];L^2(\O))$.
\end{theorem}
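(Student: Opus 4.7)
The plan is to construct strong solutions by a Faedo--Galerkin scheme on a basis compatible with the periodicity and symmetry conditions~(\ref{ga:are})--(\ref{eq:eve}), establish uniform a priori bounds in $L^{\infty}(0,T;H^1(\O))\cap L^2(0,T;H^2(\O))$ together with $L^2(0,T;L^2(\O))$ bounds on $(\p_t v,\p_t \r)$, pass to the limit in the usual way, and then derive uniqueness and continuous dependence by a standard $L^2$-difference argument. Throughout, $w$ is recovered from $v$ via $w(x,y,z,t)=-\dk{\nh\d v(x,y,\xi,t)}$ (so that $w|_{z=\pm 1}=0$ thanks to the vertical-average hypothesis on $v_0$, which is preserved by the evolution), while $p$ is reconstructed from $\r$ through the hydrostatic relation $\p_z p+\r=0$, up to a horizontal surface pressure $p_s(x,y,t)$ fixed by the vertically averaged momentum equation.

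\noindent\textbf{Key a priori estimates.} Testing the first equation of~(\ref{eq:ptl}) with $v$ and the third with $\r$, the pressure contribution integrates by parts as
\[
  \oo{\nh p\d v}=-\oo{p\,\nh\d v}=\oo{p\,\p_z w}=-\oo{\p_z p\d w}=\oo{\r w},
\]
which exactly cancels the stratification source $-\oo{w\r}$ coming from the third equation, yielding the basic energy identity $\tfrac{d}{dt}(\norm{v}_2^2+\norm{\r}_2^2)+2(\norm{\n v}_2^2+\norm{\n\r}_2^2)=0$. To climb to $H^1$, I would adopt the Cao--Titi~\cite{ct2007} barotropic/baroclinic split $v=\bar v+\tilde v$ with $\bar v(x,y,t)=\tfrac12\dz{v(x,y,z,t)}$: the mean $\bar v$ solves a forced 2D Navier--Stokes-type equation with surface pressure $p_s$ and body force $\nh\d\overline{\tilde v\otimes\tilde v}$, amenable to planar $H^1$ regularity; the fluctuation $\tilde v$ obeys a 3D equation from which $p_s$ has dropped and in which the residual pressure $-\dk{\r}$ is dominated by $\r$. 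Testing the $\tilde v$-equation with $|\tilde v|^4\tilde v$, using the analogous $L^6$ estimate for $\r$, and invoking the anisotropic Sobolev/Minkowski inequalities collected in the Appendix yields $v,\r\in L^{\infty}(0,T;L^6(\O))$. Armed with this bound, I would test the $v$-equation with $-\la v$ and the $\r$-equation with $-\la\r$; the cubic nonlinear contributions $\oo{(v\d\nh)v\d\la v}$, $\oo{w\,\p_z v\d\la v}$ and their $\r$-analogues are absorbed by H\"older and Sobolev embedding using the $L^6$ control, and a Gronwall closure produces the desired $L^{\infty}_tH^1\cap L^2_tH^2$ bound. The equations themselves then deliver $(\p_t v,\p_t\r)\in L^2(0,T;L^2(\O))$.

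\noindent\textbf{Uniqueness, continuous dependence, and main obstacle.} For two strong solutions $(v_i,\r_i)$ sharing the same data, the differences $(V,R)=(v_1-v_2,\r_1-\r_2)$ satisfy an $L^2$-energy inequality of the form $\tfrac{d}{dt}(\norm{V}_2^2+\norm{R}_2^2)+\norm{\n V}_2^2+\norm{\n R}_2^2\le \Phi(t)(\norm{V}_2^2+\norm{R}_2^2)$ with $\Phi\in L^1(0,T)$ coming from the $H^1\cap H^2$ bounds on $(v_i,\r_i)$; Gronwall then delivers both uniqueness and continuous dependence on the initial data. I expect the main obstacle to lie in the $L^6\to H^1$ upgrade: the clean pressure--stratification cancellation that drives the basic energy identity is no longer available after testing with $-\la v,-\la\r$, so one has to simultaneously exploit the hydrostatic relation $\p_z p=-\r$ and the $\bar v/\tilde v$ splitting to keep the two-dimensional surface pressure tractable while handling the strong coupling between $v$ and $\r$ introduced by the density stratification term $-w$.
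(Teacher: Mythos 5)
Your overall architecture (Galerkin approximation, a chain of a priori estimates up to $L^\infty_tH^1\cap L^2_tH^2$, then an $L^2$ difference argument for uniqueness and continuous dependence) is the right skeleton, and your basic energy identity, including the cancellation between the pressure contribution and the stratification term, matches the paper. But note that the paper's route through the intermediate estimates is different from yours: it never uses the barotropic/baroclinic splitting $v=\bar v+\tilde v$. Instead it proves an $L^4$ bound on the full pair $(v,\r)$, controlling the surface pressure $p_\gamma$ through the two-dimensional elliptic problem obtained by applying $\mathrm{div}_h$ to the momentum equation and averaging in $z$, then establishes a separate $L^2$ estimate on $(\p_z v,\p_z\r)$, and only then closes the $\norm{\n v}_2,\norm{\n\r}_2$ estimate by testing with $\p_t v-\la v$ and $\p_t\r-\la\r$ (which yields the $L^2_tL^2$ bound on $(\p_t v,\p_t\r)$ at the same time), all driven by the anisotropic Ladyzhenskaya-type inequalities of the appendix.

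The genuine gap in your sketch is the jump from the $L^6$ bound directly to the test with $-\la v,-\la\r$, with the claim that the cubic terms, in particular $\oo{w\,\p_z v\d\la v}$ and $\oo{w\,\p_z\r\,\la\r}$, are ``absorbed by H\"older and Sobolev embedding using the $L^6$ control.'' These terms are not controlled by $L^6$ bounds on $v$ (or $\tilde v$): since $w=-\dk{\nh\d v(x,y,\xi,t)}$ costs a horizontal derivative, the best available bound without further information is, via the anisotropic inequality, $\abs{\oo{w\,\p_z v\d\la v}}\le C\norm{\nh v}_2^{1/2}\norm{\la v}_2^{1/2}\norm{\p_z v}_2^{1/2}\norm{\n\p_z v}_2^{1/2}\norm{\la v}_2$, i.e. a term of size $C\norm{\nh v}_2^{1/2}\norm{\p_z v}_2^{1/2}\norm{\la v}_2^{2}$, whose coefficient is not small and which therefore can be neither absorbed into the dissipation nor handled by Gronwall unless $\norm{\p_z v}_2$ is already known to be bounded in time with $\norm{\n\p_z v}_2^2$ integrable in time. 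This is precisely why both the paper and Cao--Titi insert an intermediate vertical-derivative estimate (testing with $-\p_{zz}v$ and $-\p_{zz}\r$, where integration by parts leaves only terms quadratic in $(\p_z v,\p_z\r)$ with a single derivative, controllable by the $L^4$/$L^6$ bounds); your proposal omits this step, and without it the $H^1$ closure as you describe it fails. Once that step is restored, the rest of your plan (including the uniqueness argument and the recovery of $\p_t v,\p_t\r$) goes through, with the density stratification terms entering only as harmless lower-order couplings.
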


The existence of weak solutions to the scaled Boussinesq equations~(\ref{eq:ptv})~basically follows the proof in Lions-Temam-Wang\cite[Part IV]{rt1992}.~Specifically,~for any initial data~$(u_0,\r_0)=(v_0,w_0,\r_0) \in L^2(\O)$,~with~$\n \d u_0=0$,~we can prove that there exists a global weak solution~$(v_\e,w_\e,\r_\e)$~of the scaled Boussinesq equations~(\ref{eq:ptv}),~subject to boundary and initial conditions~(\ref{ga:are})-(\ref{ga:vew})~and symmetry condition~(\ref{eq:eve}).~Assume that initial data~$(v_0,\r_0) \in H^1(\O)$.~Using this assumption condition,~it deduces from~(\ref{eq:yxi})~that~$(v_0,w_0,\r_0) \in L^2(\O)$,~which implies that the system~(\ref{eq:ptv})~has a global weak solution~$(v_\e,w_\e,\r_\e)$.~For this case,~we have the following strong convergence theorem.

\begin{theorem}\label{th:I1t}
Given a periodic function pair~$(v_0,\r_0) \in H^1(\O)$~such that
\begin{equation*}
  \dz{\nh \d v_0(x,y,z)}=0,~\oo{v_0(x,y,z)}=0,~\textnormal{and}~\oo{\r_0(x,y,z)}=0.
\end{equation*}Suppose that~$(v_\e,w_\e,\r_\e)$~is a global weak solution of the system~(\ref{eq:ptv}),~satisfying the energy inequality (\ref{fl:vwr}),~and that~$(v,\r)$~is the unique global strong solution of the system~(\ref{eq:ptl}),~with the same boundary and initial conditions~(\ref{ga:are})-(\ref{ga:vew})~and symmetry condition~(\ref{eq:eve}).~Let
\begin{equation*}
  (V_\e,W_\e,\g_\e)=(v_\e-v,w_\e-w,\r_\e-\r).
\end{equation*}Then,~for any~$T>0$,~the following estimate holds
\begin{equation*}
  \sup_{0 \leq t \leq T}\xkh{\norm{(V_\e,\e W_\e,\g_\e)}^2_2}(t)
  +\int^T_0{\norm{\n(V_\e,\e W_\e,\g_\e)}^2_2}dt \leq \e^2 \widetilde{\mathcal{K}_1}(T),
\end{equation*}where~$\widetilde{\mathcal{K}_1}(t)$~is a nonnegative continuously increasing function that does not depend on~$\e$.~As a result,~we have the following strong convergences
\begin{gather*}
  (v_\e,\e w_\e,\r_\e) \rightarrow (v,0,\r),~in~L^{\infty}\xkh{[0,T];L^2(\O)},\\
  (\n v_\e,\e \n w_\e,\n \r_\e,w_\e) \rightarrow (\n v,0,\n \r,w),~in~L^2\xkh{[0,T];L^2(\O)},
\end{gather*}and the rate of convergence is of the order~$O(\e)$.
\end{theorem}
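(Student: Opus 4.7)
The plan is to perform a weak-strong energy estimate for the difference $(V_\e, W_\e, \g_\e)$. Set $E_\e(t) = \tfrac{1}{2}(\norm{V_\e}_2^2 + \e^2\norm{W_\e}_2^2 + \norm{\g_\e}_2^2)$. Formally subtracting \eqref{eq:ptl} from \eqref{eq:ptv} and writing $v_\e = V_\e + v$, $w_\e = W_\e + w$, one obtains that $(V_\e, W_\e, \g_\e)$ is divergence-free and that the $W_\e$-equation carries an extra source
$$
\mathcal{R}_\e = -\e^2\xkh{\p_t w - \la w + v_\e \d \nh w + w_\e \p_z w},
$$
encoding the gap between the hydrostatic relation $\p_z p + \r = 0$ and the full vertical momentum balance. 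When the difference system is tested formally against $(V_\e, W_\e, \g_\e)$, the pressure drops out by incompressibility, the cubic self-advection vanishes, and---crucially---the stratification contributions $\int_{\O}\g_\e W_\e$ from the $W_\e$-equation and $-\int_{\O}W_\e \g_\e$ from the $\g_\e$-equation cancel exactly, so that $(V_\e, W_\e, \g_\e)$ inherits the favourable dissipative structure.

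Since $(v_\e, w_\e, \r_\e)$ is only a weak solution, I would make this rigorous by expanding
$$
E_\e(t) = \tfrac{1}{2}\xkh{\norm{v_\e}_2^2 + \e^2\norm{w_\e}_2^2 + \norm{\r_\e}_2^2} - \oo{(v_\e \d v + \e^2 w_\e w + \r_\e \r)} + \tfrac{1}{2}\xkh{\norm{v}_2^2 + \e^2\norm{w}_2^2 + \norm{\r}_2^2}.
$$
The first bracket is controlled by the energy inequality \eqref{fl:vwr}; the third is an equality obtained by testing \eqref{eq:ptl} with $(v, w, \r)$, which is legitimate thanks to Theorem~\ref{th:vrt}; and the cross term is handled by inserting the strong solution $(v, w, \r)$ as a test function in the weak formulation of \eqref{eq:ptv} and, symmetrically, using $(v_\e, w_\e, \r_\e)$ in the strong system. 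Summing these three identities reconstructs the formal energy balance, with the remainder $-\int_0^t\int_{\O}\mathcal{R}_\e W_\e$ appearing on the right.

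The nonlinear cross terms surviving the cancellations reduce, after using $v_\e = V_\e + v$ etc., to
$$
\oo{(V_\e \d \nh)v \d V_\e} + \oo{W_\e \p_z v \d V_\e} + \oo{(V_\e \d \nh)\r \, \g_\e} + \oo{W_\e \p_z \r \, \g_\e},
$$
plus the $\e^2$-order pieces $\e^2\oo{(v_\e \d \nh w + w_\e \p_z w)W_\e}$ inside $\mathcal{R}_\e$. I would bound these by combining the representation $W_\e(x,y,z) = -\dk{\nh \d V_\e(x,y,\xi)}$, which controls $W_\e$ in mixed Lebesgue norms by horizontal derivatives of $V_\e$; anisotropic Ladyzhenskaya/Agmon-type inequalities from the Appendix; and the $L^2([0,T];H^2(\O))$-regularity of $(v,\r)$ from Theorem~\ref{th:vrt}. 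After Young's inequality they are absorbed into $\tfrac{1}{2}(\norm{\n V_\e}_2^2 + \e^2\norm{\n W_\e}_2^2 + \norm{\n \g_\e}_2^2)$ plus a term of the form $K(t)E_\e(t)$ with $K \in L^1(0,T)$.

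The main obstacle is making the weak-strong cross-term identity rigorous, and in particular interpreting $\e^2\int_0^t\oo{\p_s w \, W_\e}\,ds$ when $\p_t w$ need not lie in $L^2_t L^2$. The remedy is to integrate this piece by parts in time, transferring the derivative onto $W_\e$ and then using the weak formulation of the $w_\e$-equation; the boundary contributions at $s=0$ vanish by the coincidence of initial data, while the new interior terms contribute a further $\e^2\norm{\n W_\e}_2^2$-type quantity absorbable on the left. Assembling all of the above yields
$$
\frac{d}{dt}E_\e + \tfrac{1}{2}\xkh{\norm{\n V_\e}_2^2 + \e^2\norm{\n W_\e}_2^2 + \norm{\n \g_\e}_2^2} \le K(t)E_\e + \e^2 K_1(t),
$$
with $K, K_1 \in L^1(0,T)$ and $E_\e(0) = 0$. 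Gronwall's inequality then delivers $E_\e(t) \le \e^2 \widetilde{\mathcal{K}_1}(T)$, and the announced strong convergences of order $O(\e)$ follow immediately from the definition of $(V_\e, W_\e, \g_\e)$.
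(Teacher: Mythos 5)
Your proposal follows essentially the same route as the paper: a weak--strong (relative energy) estimate that combines the weak solution's energy inequality, the strong solution's energy identity, and the cross terms obtained by testing the weak formulation against the strong solution (the paper's Proposition 4.1), with the same cancellation of the stratification coupling $\int_\O \g_\e W_\e$, the same anisotropic Ladyzhenskaya estimates from the Appendix together with the a priori bounds of Theorem \ref{th:vrt}, and a Gronwall argument yielding the $O(\e)$ rate. The only difference is a matter of detail: you treat the $\e^2\p_t w$ contribution by integration by parts in time, while the paper writes $\p_t w=-\int_0^z\nh\d\p_t v\,d\xi$ and moves the horizontal derivative onto $W_\e$ (see the term $\e^2\int_0^r\int_\O(\int_0^z\p_t v\,d\xi)\d\nh W_\e$ in Proposition \ref{po:vew}), so that only $\p_t v\in L^2([0,T];L^2(\O))$ is needed; both devices work.
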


Next,~we suppose that the initial data~$(v_0,\r_0)$~belongs to~$H^2(\O)$.~Then from~(\ref{eq:yxi})~it follows that~$(v_0,w_0,\r_0)$~belongs to~$H^1(\O)$.~By the similar argument as Lions-Temam-Wang~\cite[Part IV]{rt1992},~there exists a unique local strong solution~$(v_\e,w_\e,\r_\e)$~to the system~(\ref{eq:ptv}),~subject to the boundary and initial conditions~(\ref{ga:are})-(\ref{ga:vew})~and symmetry condition~(\ref{eq:eve}).~So we denote by~$T^*_\e$~the maximal existence time of the local strong solution~$(v_\e,w_\e,\r_\e)$~to the system~(\ref{eq:ptv}).~In this case,~we also have the following strong convergence theorem.

\begin{theorem}\label{th:I12}
Given a periodic function~$(v_0,\r_0) \in H^2(\O)$~such that
\begin{equation*}
  \dz{\nh \d v_0(x,y,z)}=0,~\oo{v_0(x,y,z)}=0,~\textnormal{and}~\oo{\r_0(x,y,z)}=0.
\end{equation*}Suppose that~$(v_\e,w_\e,\r_\e)$~is the unique local strong solution of the system~(\ref{eq:ptv}),~and that~$(v,\r)$~is the unique global strong solution of the system~(\ref{eq:ptl}),~with the same boundary and initial conditions~(\ref{ga:are})-(\ref{ga:vew})~and symmetry condition~(\ref{eq:eve}).~Let
\begin{equation*}
  (V_\e,W_\e,\g_\e)=(v_\e-v,w_\e-w,\r_\e-\r).
\end{equation*}Then,~for any~$T>0$,~there is a small positive constant~$\e(T)=\frac{3\beta_0}{4\sqrt{\widetilde{\mathcal{K}_2}(T)}}$~such that the system~(\ref{eq:ptv})~exists a unique strong solution~$(v_\e,w_\e,\r_\e)$~on the time interval~$[0,T]$,~and that the system~(\ref{fl:Ve0})-(\ref{fl:zWe})~(see Section 5,~below)~has the following estimate
\begin{equation*}
  \sup_{0 \leq t \leq T}\xkh{\norm{(V_\e,\e W_\e,\g_\e)}^2_{H^1}}(t)
  +\int^{T}_0{\norm{\n(V_\e,\e W_\e,\g_\e)}^2_{H^1}}dt \leq \e^2\widetilde{\mathcal{K}_3}(T),
\end{equation*}provided that $\e \in (0,\e(T))$,~where~$\widetilde{\mathcal{K}_3}(t)$~is a nonnegative continuously increasing function that does not depend on~$\e$.~As a result,~we have the following strong convergences
\begin{gather*}
  (v_\e,\e w_\e,\r_\e) \rightarrow (v,0,\r),~in~L^{\infty}\xkh{[0,T];H^1(\O)},\\
  (\n v_\e,\e \n w_\e,\n \r_\e,w_\e) \rightarrow (\n v,0,\n \r,w),~in~L^2\xkh{[0,T];H^1(\O)},\\
  w_\e \rightarrow w,~in~L^{\infty}\xkh{[0,T];L^2(\O)},
\end{gather*}and the rate of convergence is of the order~$O(\e)$.
\end{theorem}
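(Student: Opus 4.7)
The plan is a three-stage $H^1$ energy argument, closed by a continuity bootstrap that extends the local strong solution $(v_\e,w_\e,\r_\e)$ up to the fixed time $T$.

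\textbf{Upgrading the limit problem.} Since the data $(v_0,\r_0)$ lies in $H^2(\O)$, I would first bootstrap Theorem~\ref{th:vrt} from $H^1$ to $H^2$, obtaining
\begin{equation*}
  (v,\r)\in C([0,T];H^2)\cap L^2([0,T];H^3),\qquad (\p_tv,\p_t\r)\in L^\infty([0,T];L^2)\cap L^2([0,T];H^1),
\end{equation*}
together with the matching bounds on $w$ derived from $w=-\dk{\nh\d v(\cdot,\xi)}$. This is a standard parabolic bootstrap: test the $v$-equation with $\la^2 v$ and its time derivative with $-\la\p_tv$, handle the nonlinearities by the anisotropic Ladyzhenskaya-type estimates familiar from the primitive-equations literature, then feed the resulting bounds into the $\r$-equation. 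Call the resulting a priori constant $\widetilde{\mathcal K_2}(T)$.

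\textbf{$H^1$ estimate on the difference system.} Subtracting (\ref{eq:ptl}) from (\ref{eq:ptv}) produces a perturbation system for $(V_\e,W_\e,\g_\e)$ in which the $v$-momentum and $\r$-equations are homogeneous of degree one in the differences, while the $w$-momentum equation carries the hydrostatic residual $-\e^2(\p_tw-\la w+v\d\nh w+w\p_zw)$ on the right-hand side. Theorem~\ref{th:I1t} already supplies the $L^2$ estimate at rate $\e^2$. To upgrade to $H^1$, I would test the $V_\e$-equation with $-\la V_\e$, the $W_\e$-equation with $-\la W_\e$, and the $\g_\e$-equation with $-\la\g_\e$, and sum. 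The gradient of the pressure drops out by the incompressibility $\nh\d V_\e+\p_zW_\e=0$; the cross-coupling between the $\g_\e$ term in the $W_\e$-equation and the $-W_\e$ stratification term in the $\g_\e$-equation cancels after integration by parts, each contributing $\pm\oo{\n W_\e\d\n\g_\e}$ without an $\e^2$ weight. The quadratic transport $(v_\e\d\nh)V_\e$, $w_\e\p_zV_\e$, etc.\ are absorbed into the diffusion via anisotropic Sobolev estimates; the inhomogeneous cross terms $(V_\e\d\nh)v$ and $W_\e\p_zv$ are controlled using the Stage-one $H^2$ bound on $(v,\r,w)$ as the coefficient and the reconstruction $W_\e=-\dk{\nh\d V_\e(\cdot,\xi)}$ to dominate the relevant mixed norms of $W_\e$ by $\n V_\e$; and the $\e^2$ residual contributes a forcing of size $\e^2 G(t)$ with $G\in L^1(0,T)$ depending only on $\widetilde{\mathcal K_2}(T)$. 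A Gronwall argument on the resulting differential inequality then closes the estimate at the announced rate $\e^2\widetilde{\mathcal K_3}(T)$.

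\textbf{Continuation, the main obstacle.} A priori $(v_\e,w_\e,\r_\e)$ lives only on its maximal strong-solution interval $[0,T_\e^*)$, so the estimate above must be set up on $[0,T\wedge T_\e^*)$ and used to preclude $T_\e^*\leq T$. Combining the Stage-two bound on the differences with the Stage-one bound on the limit solution gives the triangle estimate
\begin{equation*}
  \norm{(v_\e,\e w_\e,\r_\e)}_{H^1}(t)\leq \sqrt{\widetilde{\mathcal K_2}(T)}+\e\sqrt{\widetilde{\mathcal K_3}(T)},
\end{equation*}
which remains strictly below the $H^1$ blow-up threshold $\beta_0$ supplied by the local well-posedness theory as long as $\e<3\beta_0/(4\sqrt{\widetilde{\mathcal K_2}(T)})=\e(T)$; this is exactly the smallness condition in the statement. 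The standard blow-up criterion for (\ref{eq:ptv}) then forbids $T_\e^*\leq T$, so the strong solution extends to $[0,T]$ and the $H^1$ estimate is legitimate on the full interval. The stated strong convergences at rate $O(\e)$ follow at once.
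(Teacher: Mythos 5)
Your three-stage skeleton does match the paper's architecture: the $H^2$ upgrade of the limit solution is Proposition~\ref{po:e5t} (the paper tests (\ref{fl:dkr})--(\ref{fl:lar}) with $\la(\la v-\p_t v)$ and $\la(\la\r-\p_t\r)$, essentially your choice of multipliers), the $H^1$ estimate on the difference system (\ref{fl:Ve0})--(\ref{fl:zWe}) obtained by testing with $-\la V_\e$, $-\la W_\e$, $-\la\g_\e$ is Proposition~\ref{po:I2t}, and the continuation step is Proposition~\ref{po:I12}. However, there is a genuine gap in how you close Stage two and, correlatively, in your Stage-three logic. Testing $(U_\e\d\n)V_\e$ against $-\la V_\e$ and estimating via Lemma~\ref{le:psi} produces the cubic term $C\norm{\n V_\e}_2\norm{\la V_\e}^2_2$ (similarly for $U_\e\d\n\g_\e$ and $\e^2\,U_\e\d\n W_\e$); this cannot be absorbed into the dissipation $\norm{\la V_\e}^2_2$ by Young's inequality, and a plain Gronwall argument does not close either, since the would-be Gronwall coefficient $\norm{\la V_\e}^2_2$ is precisely the quantity whose time integral you are trying to bound. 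The $H^1$ difference estimate is therefore only conditional: the paper assumes $\sup_{0\le s\le t}\bigl(\norm{\n(V_\e,\g_\e)}^2_2+\e^2\norm{\n W_\e}^2_2\bigr)\le\beta^2_0$, with $\beta_0$ determined by the absorption constant in (\ref{fl:ddt}) (namely $\beta_0=\sqrt{3/(16C_\delta)}$), and it is this smallness hypothesis on the \emph{difference} that must be propagated.

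Your Stage three does not perform that task. You compare $\norm{(v_\e,\e w_\e,\r_\e)}_{H^1}$ with a purported ``$H^1$ blow-up threshold $\beta_0$ supplied by local well-posedness theory''; no such fixed threshold exists, $\beta_0$ in the theorem comes from the nonlinear absorption just described, and the comparison is in any case impossible, since your $\sqrt{\widetilde{\mathcal{K}_2}(T)}$ (the bound on the limit solution) is large while $\beta_0$ is a fixed small constant; note also that in your triangle inequality the factor $\e$ multiplies $\sqrt{\widetilde{\mathcal{K}_3}(T)}$, so the condition $\e<3\beta_0/(4\sqrt{\widetilde{\mathcal{K}_2}(T)})$ does not make that right-hand side small. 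The correct argument (Propositions~\ref{po:I2t} and~\ref{po:I12}) applies the smallness to the difference: as long as $\norm{\n(V_\e,\e W_\e,\g_\e)}^2_2\le\beta^2_0$ holds, the conditional estimate returns $\norm{\n(V_\e,\e W_\e,\g_\e)}^2_2\le\e^2\widetilde{\mathcal{K}_2}(T)\le(3\beta_0/4)^2<\beta^2_0$ for $\e<\e(T)$, so the hypothesis self-improves and, since the difference vanishes at $t=0$, persists on $[0,T\wedge T^*_\e)$ by continuity; continuation past $T$ then follows because $\norm{(v_\e,w_\e,\r_\e)}_{H^1}\le\norm{(v,w,\r)}_{H^1}+\norm{(V_\e,W_\e,\g_\e)}_{H^1}$ remains bounded (finite, not small) on $[0,T\wedge T^*_\e)$, which is incompatible with $T^*_\e\le T$ by the blow-up criterion. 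With Stage two restated as a conditional estimate and Stage three reorganized as this self-improvement bootstrap on the difference, your proof coincides with the paper's.
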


\begin{remark}
It should be pointed out that the case of~$\bar{\varrho}(z)=Constant$~has been studied by the authors~(see\cite{pz2021}).~Compared with\cite{pz2021},~the resulting limit system here contains the density stratification term~$w$.~In order to establish the~$H^1$~\textit{priori}~estimate on the system~(\ref{eq:ptl}),~we have to deal with equation~(\ref{fl:dkr})~and~(\ref{fl:lar}),~simultaneously.~In this way,~the global well-posedness of strong solutions to the viscous primitive equations with density stratification~(\ref{eq:ptl})~is obtained,~and the~$H^1$~\textit{priori}~estimate on strong solutions will be used in the proof of~Theorem~\ref{th:I1t}.~Moreover,~Theorem~\ref{th:I12}~is proved by establishing the second order energy estimate on strong solutions of the system~(\ref{eq:ptl}).
\end{remark}

\section{Global well-posedness of the primitive equations}
In this section,~we establish the global well-posedness of strong solutions to the viscous primitive equations with density stratification~(\ref{eq:ptl}),~subject to boundary and initial conditions~(\ref{ga:are})-(\ref{ga:vew})~and symmetry condition~(\ref{eq:eve}).

Before this,~we firstly use the symmetry condition~(\ref{eq:eve})~to reformulate the system~(\ref{eq:ptl}).~This symmetry condition indicates~$w|_{z=0}=0$.~Integrating the last equation to the system~(\ref{eq:ptl})~with respect to~$z$~yields
\begin{equation*}
  w(x,y,z,t)=-\dk{\nh \d v(x,y,\xi,t)}.
\end{equation*}~We integrate the second equation of system~(\ref{eq:ptl})~with respect to~$z$~to obtain
\begin{equation*}
  p(x,y,z,t)=p_\gamma(x,y,t)-\dk{\r(x,y,\xi,t)},
\end{equation*}in which~$p_\gamma(x,y,t)$~represents unknown surface pressure as~$z=0$.~Based on the above relations,~we can recast the system~(\ref{eq:ptl})~as
\begin{flalign}
  &\p_t v-\la v+(v \d \nh)v-\xkh{\dk{\nh \d v(x,y,\xi,t)}} \p_z v+\nh p_\gamma(x,y,t)\nonumber\\
  &-\dk{\nh \r(x,y,\xi,t)}=0, \label{fl:dkr}\\
  &\p_t \r-\la \r+v \d \nh \r-\xkh{\dk{\nh \d v(x,y,\xi,t)}} \p_z \r\nonumber\\
  &+\dk{\nh \d v(x,y,\xi,t)}=0, \label{fl:lar}
\end{flalign}satisfying boundary and initial conditions
\begin{gather*}
  v~\textnormal{and}~\r~\textnormal{are periodic in}~x,y,z, \\
  (v,\r)|_{t=0}=(v_0,\r_0),
\end{gather*}and symmetry condition
\begin{equation*}
  v~\textnormal{and}~\r~\textnormal{are even and odd with respect to}~z,~\textnormal{respectively}.
\end{equation*}

\subsection{$L^2$~estimates on~$v$~and~$\r$.}
Taking the~$L^2(\O)$~inner product of the equation~(\ref{fl:dkr})~and~(\ref{fl:lar})~with~$v$~and~$\r$~respectively,~and integrating by parts,~we reach
\begin{flalign*}
  \frac{1}{2}\frac{d}{dt}&\xkh{\norm{v}^2_2+\norm{\r}^2_2}+\norm{\n v}^2_2+\norm{\n \r}^2_2\\
  &=\oo{\xkh{\dk{\nh \r(x,y,\xi,t)}} \d v}\\
  &\quad-\oo{\xkh{\dk{\nh \d v(x,y,\xi,t)}} \r}=0,
\end{flalign*}where we have used the following facts that
\begin{gather*}
  \oo{\zkh{\xkh{v \d \nh}v-\xkh{\dk{\nh \d v(x,y,\xi,t)}} \p_z v} \d v}=0,\\
  \oo{\zkh{v \d \nh \r-\xkh{\dk{\nh \d v(x,y,\xi,t)}} \p_z \r}\r}=0,
\end{gather*}and
\begin{equation*}
  \oo{\nh p_\gamma(x,y,t) \d v}=0.
\end{equation*}Integrating the differential equation above in time between~$0$~to~$t$,~we have
\begin{equation}\label{eq:e1t}
  \xkh{\norm{v}^2_2+\norm{\r}^2_2}(t)+\ds{\xkh{\norm{\n v}^2_2+\norm{\n \r}^2_2}}\leq \eta_1,
\end{equation}where
\begin{equation*}
  \eta_1=C\xkh{\norm{v_0}^2_{H^1}+\norm{\r_0}^2_{H^1}}.
\end{equation*}

\subsection{$L^4$~estimates on~$v$~and~$\r$}
Multiplying the equation~(\ref{fl:dkr})~and~(\ref{fl:lar})~by~$|v|^2 v$~and~$|\r|^2 \r$~respectively,~and integrating over~$\O$,~then it follows from integration by parts that
\begin{flalign}
  \frac{1}{4}\frac{d}{dt}&\norm{v}^4_4+\oo{|v|^2 \xkh{|\n v|^2+2\abs{\n|v|}^2}}\nonumber\\
  &\quad+\frac{1}{4}\frac{d}{dt}\norm{\r}^4_4+\oo{|\r|^2 \xkh{|\n \r|^2+2\abs{\n|\r|}^2}}\nonumber\\
  &=\oo{\xkh{\dk{v(x,y,\xi,t)}} \d \zkh{\nh \xkh{|\r|^2 \r}}}\nonumber\\
  &\quad-\oo{\xkh{\dk{\r(x,y,\xi,t)}} (\nh \d |v|^2 v)}\nonumber\\
  &\quad-\oo{\nh p_\gamma(x,y,t) \d |v|^2 v}\nonumber\\
  &=:D_1+D_2+D_3,\label{fl:v44}
\end{flalign}note that we have used the following facts that
\begin{gather*}
  \oo{\zkh{\xkh{v \d \nh}v-\xkh{\dk{\nh \d v(x,y,\xi,t)}}\p_z v} \d |v|^2 v}=0,\\
  \oo{\zkh{v \d \nh\r-\xkh{\dk{\nh \d v(x,y,\xi,t)}}\p_z \r} |\r|^2 \r}=0.
\end{gather*}We now estimate the first integral term~$D_1$~on the right-hand side of~(\ref{fl:v44}).~Using the H\"{o}lder inequality yields
\begin{flalign*}
  &D_1:=\oo{\xkh{\dk{v(x,y,\xi,t)}} \d \zkh{\nh \xkh{|\r|^2 \r}}}\\
  &\qquad\leq C\mm{\xkh{\dz{|v|}} \xkh{\dz{|\r|^2 |\nh \r|}}}\\
  &\qquad\leq C\mm{\xkh{\dz{|v|}} \xkh{\dz{|\r|^2}}^{1/2} \xkh{\dz{|\r|^2 |\nh \r|^2}}^{1/2}}\\
  &\qquad\leq C\xkh{\oo{|v|^4}}^{1/4} \xkh{\oo{|\r|^4}}^{1/4} \xkh{\oo{|\r|^2 |\nh \r|^2}}^{1/2}\\
  &\qquad\leq C\norm{v}_4 \norm{\r}_4 \xkh{\oo{|\r|^2 |\n \r|^2}}^{1/2}.
\end{flalign*}Due to the Young inequality,~we have
\begin{flalign}
  D_1&\leq C\norm{v}^2_4 \norm{\r}^2_4+\frac{3}{8}\oo{|\r|^2 |\n \r|^2}\nonumber\\
  &\leq C\xkh{\norm{v}^4_4+\norm{\r}^4_4}+\frac{3}{8}\oo{|\r|^2 |\n \r|^2}.\label{fl:r2n}
\end{flalign}A similar argument as integral term~$D_1$ gives
\begin{flalign}
  D_2:&=\oo{\xkh{-\dk{\r(x,y,\xi,t)}} (\nh \d |v|^2 v)}\nonumber\\
  &\leq C\xkh{\norm{v}^4_4+\norm{\r}^4_4}+\frac{3}{8}\oo{|v|^2 |\n v|^2}.\label{fl:v2n}
\end{flalign}For the last integral term $D_3$~on the right-hand side of~(\ref{fl:v44}),~we use the Lemma~\ref{le:phi}~and Poincar\'{e} inequality to reach
\begin{flalign}
  D_3:&=-\oo{\nh p_\gamma(x,y,t) \d |v|^2 v}\nonumber\\
  &\leq \mm{|\nh p_\gamma(x,y,t)| \xkh{\dz{|v||v|^2}}}\nonumber\\
  &\leq C\norm{v}^{1/2}_2\norm{\n v}^{1/2}_2\xkh{\norm{v}^2_4
  +\norm{v}_4\norm{|v| \n v}^{1/2}_2}\norm{\nh p_\gamma}_{2,M}.\label{fl:sxy}
\end{flalign}Applying the operator~$\textnormal{div}_h$~to the equation~(\ref{fl:dkr})~and integrating the resulting equation with respect to~$z$~from~$-1$~to~$1$,~we can see that~$p_\gamma(x,y,t)$~satisfies the following system
\begin{equation*}
\begin{cases}
  -\la_h p_\gamma=\frac{1}{2}\dz{\nh \d \zkh{(\nh \d (v \otimes v))-\dk{\nh \r}}},\\
  \mm{p_\gamma(x,y,t)}=0,~p_\gamma~\textnormal{is periodic in}~x,y,
\end{cases}
\end{equation*}where the condition~$\mm{p_\gamma(x,y,t)}=0$~is imposed to guarantee the uniqueness of $p_\gamma(x,y,t)$.~By virtue of the elliptic estimates and Poincar\'{e} inequality,~we obtain
\begin{flalign}
  \norm{\nh p_\gamma}_{2,M} &\leq C\norm{\dz{\nh \d \xkh{(\nh \d (v \otimes v))-\dk{\nh \r}}}}_{2,M}\nonumber\\
  &\leq C\xkh{\norm{\nh \d (v \otimes v)}_2+\norm{\nh\r}_2}\nonumber\\
  &\leq C\xkh{\norm{|v| \n v}_2+\norm{\n\r}_2}.\label{fl:s2M}
\end{flalign}Substituting~(\ref{fl:s2M})~into~(\ref{fl:sxy})~and then using the Young inequality yield
\begin{flalign}
  D_3&\leq C\norm{v}^{1/2}_2\norm{\n v}^{1/2}_2\xkh{\norm{v}^2_4+\norm{v}_4\norm{|v| \n v}^{1/2}_2}\xkh{\norm{\n\r}_2+\norm{|v| \n v}_2}\nonumber\\
  &\leq C\norm{v}^{1/2}_2\norm{\n v}^{1/2}_2 \xkh{\norm{\n\r}_2\norm{v}^2_4+\norm{\n\r}_2\norm{v}_4\norm{|v| \n v}^{1/2}_2}\nonumber\\
  &\quad+C\norm{v}^{1/2}_2\norm{\n v}^{1/2}_2 \xkh{\norm{v}^2_4\norm{|v| \n v}_2+\norm{v}_4\norm{|v| \n v}^{3/2}_2}\nonumber\\
  &\leq C\xkh{\norm{v}_2\norm{\n v}_2+\norm{v}^2_2\norm{\n v}^2_2+\norm{\n \r}^2_2}\xkh{\norm{v}^4_4+\norm{\r}^4_4}\nonumber\\
  &\quad+\frac{1}{4}\xkh{\norm{v}_2\norm{\n v}_2+\norm{\n \r}^2_2}+\frac{1}{4}\norm{|v| \n v}^2_2\label{fl:vnv}
\end{flalign}

Adding~(\ref{fl:r2n}),~(\ref{fl:v2n})~and~(\ref{fl:vnv})~leads to
\begin{flalign*}
  \frac{d}{dt}&\xkh{\norm{v}^4_4+\norm{\r}^4_4}+\oo{\xkh{|v|^2 |\n v|^2+|\r|^2 |\n \r|^2}}\\
  &\leq C\xkh{\norm{v}_2\norm{\n v}_2+\norm{v}^2_2\norm{\n v}^2_2+\norm{\n \r}^2_2+1}\xkh{\norm{v}^4_4+\norm{\r}^4_4}\\
  &\quad+\xkh{\norm{v}_2\norm{\n v}_2+\norm{\n \r}^2_2}.
\end{flalign*}Owing to the Gronwall inequality,~it deduce from~(\ref{eq:e1t})~that
\begin{flalign}
  &\xkh{\norm{v}^4_4+\norm{\r}^4_4}(t)+\ds{\oo{\xkh{|v|^2 |\n v|^2+|\r|^2 |\n \r|^2}}}\nonumber\\
  &\qquad\leq \exp\dkh{C\ds{\xkh{\norm{v}_2\norm{\n v}_2+\norm{v}^2_2\norm{\n v}^2_2+\norm{\n \r}^2_2+1}}}\nonumber\\
  &\qquad\quad\times\zkh{\norm{v_0}^4_4+\norm{\r_0}^4_4+\ds{\xkh{\norm{v}_2\norm{\n v}_2+\norm{\n \r}^2_2}}}\leq \eta_2(t),\label{fl:e2t}
\end{flalign}where
\begin{equation*}
  \eta_2(t)=(t+2)e^{C(t+2)\xkh{\eta^2_1+\eta_1+1}}\zkh{\norm{v_0}^4_{H^1}+\norm{\r_0}^4_{H^1}+\eta_1}.
\end{equation*}

\subsection{$L^2$~estimates on~$\p_z v$~and~$\p_z \r$}
Taking the~$L^2(\O)$~inner product of the equation~(\ref{fl:dkr})~and~(\ref{fl:lar})~with~$-\p_{zz} v$~and~$-\p_{zz}\r$~respectively,~we obtain
\begin{flalign}
  \frac{1}{2}\frac{d}{dt}&\xkh{\norm{\p_z v}^2_2+\norm{\p_z \r}^2_2}+\norm{\n \p_z v}^2_2+\norm{\n \p_z \r}^2_2\nonumber\\
  &=\oo{\zkh{\xkh{\dk{\nh \d v(x,y,\xi,t)}}\p_{zz}\r-\xkh{\dk{\nh \r(x,y,\xi,t)}} \d \p_{zz}v}}\nonumber\\
  &\quad+\oo{\zkh{v \d \nh\r-\xkh{\dk{\nh \d v(x,y,\xi,t)}}\p_z \r} \p_{zz}\r}\nonumber\\
  &\quad+\oo{\zkh{\xkh{v \d \nh}v-\xkh{\dk{\nh \d v(x,y,\xi,t)}}\p_z v} \d \p_{zz} v}\nonumber\\
  &=:D_1+D_2+D_3.\label{fl:zzv}
\end{flalign}For the first integral term~$D_1$~on the right-hand side of~(\ref{fl:zzv}),~we use the integration by parts,~H\"{o}lder inequality and Young inequality to reach
\begin{flalign*}
  D_1:&=\oo{\zkh{\xkh{\dk{\nh \d v}}\p_{zz}\r-\xkh{\dk{\nh \r}} \d \p_{zz}v}}\\
  &=\oo{(v \d \nh \p_z \r-\r\nh \d \p_z v)}\\
  &\leq \norm{v}_2\norm{\nh \p_z \r}_2+\norm{\r}_2\norm{\nh \p_z v}_2\\
  &\leq C\xkh{\norm{v}^2_2+\norm{\r}^2_2}+\frac{1}{6}\xkh{\norm{\n \p_z v}^2_2+\norm{\n \p_z \r}^2_2}.
\end{flalign*}In order to obtain the upper bound for the second integral term~$D_2$~on the right-hand side of~(\ref{fl:zzv}),~using the integration by parts,~H\"{o}lder inequality,~Lebesgue interpolation inequality,~Sobolev embedding,~as well as Poincar\'{e} inequality gives
\begin{flalign*}
  D_2:&=\oo{\zkh{v \d \nh\r-\xkh{\dk{\nh \d v(x,y,\xi,t)}}\p_z \r} \p_{zz}\r}\\
  &=\oo{\zkh{\xkh{\nh \d \p_z v}\r \p_z \r+\xkh{\p_z v \d \nh \p_z \r}\r-2\xkh{v \d \nh \p_z \r}\p_z \r}}\\
  &\leq \norm{\r}_4 \norm{\p_z \r}_4 \norm{\nh \p_z v}_2
  +\xkh{\norm{\r}_4 \norm{\p_z v}_4+\norm{v}_4 \norm{\p_z \r}_4}\norm{\nh \p_z \r}_2\\
  &\leq C\norm{\r}_4 \norm{\p_z \r}^{1/4}_2 \norm{\n \p_z \r}^{3/4}_2 \norm{\n \p_z v}_2
  +C\norm{v}_4 \norm{\p_z \r}^{1/4}_2 \norm{\n \p_z \r}^{7/4}_2\\
  &\quad+C\norm{\r}_4 \norm{\p_z v}^{1/4}_2 \norm{\n \p_z v}^{3/4}_2 \norm{\n \p_z \r}_2.
\end{flalign*}By virtue of the Young inequality,~we have
\begin{equation*}
  D_2\leq C\xkh{\norm{v}^8_4+\norm{\r}^8_4}\xkh{\norm{\p_z v}^2_2+\norm{\p_z \r}^2_2}
  +\frac{1}{6}\xkh{\norm{\n \p_z v}^2_2+\norm{\n \p_z \r}^2_2}.
\end{equation*}With the similar argument of the second integral terms~$D_2$~on the right-hand side of~(\ref{fl:zzv}),~the last integral term~$D_3$~can be estimated as
\begin{flalign*}
  D_3:&=\oo{\zkh{\xkh{v \d \nh}v-\xkh{\dk{\nh \d v(x,y,\xi,t)}}\p_z v} \d \p_{zz} v}\\
  &=\oo{\zkh{\xkh{\nh \d \p_z v}v \d \p_z v+\xkh{\p_z v \d \nh}\p_z v \d v-2\xkh{v \d \nh}\p_z v \d \p_z v}}\\
  &\leq C\norm{v}^8_4\xkh{\norm{\p_z v}^2_2+\norm{\p_z \r}^2_2}+\frac{1}{6}\norm{\n \p_z v}^2_2.
\end{flalign*}Combining the estimates for~$D_1$,~$D_2$~and~$D_3$~leads to
\begin{flalign*}
  \frac{d}{dt}&\xkh{\norm{\p_z v}^2_2+\norm{\p_z \r}^2_2}+\norm{\n \p_z v}^2_2+\norm{\n \p_z \r}^2_2\\
  &\qquad\leq C\xkh{\norm{v}^8_4+\norm{\r}^8_4}\xkh{\norm{\p_z v}^2_2+\norm{\p_z \r}^2_2}
  +C\xkh{\norm{v}^2_2+\norm{\r}^2_2}.
\end{flalign*}Using the Gronwall inequality,~it follows from~(\ref{eq:e1t})~and~(\ref{fl:e2t})~that
\begin{flalign}
  &\xkh{\norm{\p_z v}^2_2+\norm{\p_z \r}^2_2}(t)+\ds{\xkh{\norm{\n \p_z v}^2_2+\norm{\n \p_z \r}^2_2}}\nonumber\\
  &\qquad\leq \exp\dkh{C\ds{\xkh{\norm{v}^8_4+\norm{\r}^8_4}}}\nonumber\\
  &\qquad\quad\times\zkh{\norm{\p_z v_0}^2_2+\norm{\p_z \r_0}^2_2+C\ds{\xkh{\norm{v}^2_2+\norm{\r}^2_2}}} \leq \eta_3(t),\label{fl:e3t}
\end{flalign}where
\begin{equation*}
  \eta_3(t)=C(t+1)e^{Ct\eta^2_2(t)}\zkh{\norm{v_0}^2_{H^1}+\norm{\r_0}^2_{H^1}+\eta_1}.
\end{equation*}

\subsection{$L^2$~estimates on~$\n v$~and~$\n \r$}
Multiplying the equation~(\ref{fl:dkr})~and~(\ref{fl:lar})~by~$\p_t v-\la v$~and~$\p_t \r-\la \r$~respectively,~integrating over~$\O$,~and integrating by parts,~we reach
\begin{flalign}
  \frac{d}{dt}&\xkh{\norm{\n v}^2_2+\norm{\n \r}^2_2}+\norm{\p_t v}^2_2+\norm{\la v}^2_2+\norm{\p_t \r}^2_2+\norm{\la \r}^2_2\nonumber\\
  &=\oo{\zkh{\xkh{\dk{\nh \d v(x,y,\xi,t)}} \xkh{\la \r-\p_t \r}-\xkh{\dk{\nh \r(x,y,\xi,t)}} \d \xkh{\la v-\p_t v}}}\nonumber\\
  &\quad+\oo{\zkh{v \d \nh \r \xkh{\la \r-\p_t \r}+\xkh{v \d \nh}v \d \xkh{\la v-\p_t v}}}\nonumber\\
  &\quad+\oo{\xkh{\dk{\nh \d v(x,y,\xi,t)}}\zkh{\p_z \r \xkh{\p_t \r-\la \r}+\p_z v \d \xkh{\p_t v-\la v}}}\nonumber\\
  &=:D_1+D_2+D_3.\label{fl:lav}
\end{flalign}Due to the H\"{o}lder inequality and Young inequality,~the first integral term~$D_1$~on the right-hand side of~(\ref{fl:lav})~can be bounded as
\begin{flalign}
  &D_1:=\oo{\zkh{\xkh{\dk{\nh \d v(x,y,\xi,t)}} \xkh{\la \r-\p_t \r}-\xkh{\dk{\nh \r(x,y,\xi,t)}} \d \xkh{\la v-\p_t v}}}\nonumber\\
  &\leq \mm{\zkh{\xkh{\dz{|\nh v|}}\xkh{\dz{(|\p_t \r|+|\la \r|)}}+\xkh{\dz{|\nh \r|}}\xkh{\dz{(|\p_t v|+|\la v|)}}}}\nonumber\\
  &\leq C\norm{\nh v}_2\xkh{\norm{\p_t \r}_2+\norm{\la \r}_2}+C\norm{\nh \r}_2\xkh{\norm{\p_t v}_2+\norm{\la v}_2}\nonumber\\
  &\leq C\xkh{\norm{\n v}^2_2+\norm{\n \r}^2_2}+\frac{1}{6}\xkh{\norm{\p_t v}^2_2+\norm{\la v}^2_2+\norm{\p_t \r}^2_2+\norm{\la \r}^2_2}.\label{fl:hdv}
\end{flalign}In order to estimate the second integral term~$D_2$~on the right-hand side of~(\ref{fl:lav}),~we use the Lemma~\ref{le:phi},~Poincar\'{e} inequality and Young inequality to obtain
\begin{flalign}
  D_2:&=\oo{\zkh{v \d \nh \r \xkh{\la \r-\p_t \r}+\xkh{v \d \nh}v \d \xkh{\la v-\p_t v}}}\nonumber\\
  &\leq \mm{\xkh{\dz{\xkh{|v|+|\p_z v|}}} \xkh{\dz{|\nh \r|\xkh{|\p_t \r|+|\la \r|}}}}\nonumber\\
  &\quad+\mm{\xkh{\dz{\xkh{|v|+|\p_z v|}}} \xkh{\dz{|\nh v|\xkh{|\p_t v|+|\la v|}}}}\nonumber\\
  &\leq C\xkh{\norm{v}^{1/2}_2 \norm{\n v}^{1/2}_2+\norm{\p_z v}^{1/2}_2 \norm{\n \p_z v}^{1/2}_2}
  \norm{\n \r}^{1/2}_2 \norm{\la \r}^{1/2}_2 \xkh{\norm{\p_t \r}_2+\norm{\la \r}_2}\nonumber\\
  &\quad+C\xkh{\norm{v}^{1/2}_2 \norm{\n v}^{1/2}_2+\norm{\p_z v}^{1/2}_2 \norm{\n \p_z v}^{1/2}_2}
  \norm{\n v}^{1/2}_2 \norm{\la v}^{1/2}_2 \xkh{\norm{\p_t v}_2+\norm{\la v}_2}\nonumber\\
  &\leq C\xkh{\norm{v}^2_2\norm{\n v}^2_2+\norm{\p_z v}^2_2 \norm{\n \p_z v}^2_2}\xkh{\norm{\n v}^2_2+\norm{\n \r}^2_2}\nonumber\\
  &\quad+\frac{1}{6}\xkh{\norm{\p_t v}^2_2+\norm{\la v}^2_2+\norm{\p_t \r}^2_2+\norm{\la \r}^2_2}.\label{fl:vdn}
\end{flalign}Finally,~it remains to estimate the last integral term~$D_3$~on the right-hand side of~(\ref{fl:lav}).~A similar argument as~$D_2$~yields
\begin{flalign}
  D_3:&=\oo{\xkh{\dk{\nh \d v(x,y,\xi,t)}}\zkh{\p_z \r \xkh{\p_t \r-\la \r}+\p_z v \d \xkh{\p_t v-\la v}}}\nonumber\\
  &\leq C\xkh{\norm{\p_z v}^2_2 \norm{\n \p_z v}^2_2+\norm{\p_z \r}^2_2 \norm{\n \p_z \r}^2_2}\xkh{\norm{\n v}^2_2+\norm{\n \r}^2_2}\nonumber\\
  &\quad+\frac{1}{6}\xkh{\norm{\p_t v}^2_2+\norm{\la v}^2_2+\norm{\p_t \r}^2_2+\norm{\la \r}^2_2}.\label{fl:nhd}
\end{flalign}Summing~(\ref{fl:hdv}),~(\ref{fl:vdn})~and~(\ref{fl:nhd})~leads to
\begin{flalign}
  \frac{d}{dt}&\xkh{\norm{\n v}^2_2+\norm{\n \r}^2_2}+\frac{1}{2}\xkh{\norm{\p_t v}^2_2+\norm{\la v}^2_2+\norm{\p_t \r}^2_2+\norm{\la \r}^2_2}\nonumber\\
  &\leq C\xkh{\norm{v}^2_2\norm{\n v}^2_2+\norm{\p_z v}^2_2 \norm{\n \p_z v}^2_2+\norm{\p_z \r}^2_2 \norm{\n \p_z \r}^2_2+1}\xkh{\norm{\n v}^2_2+\norm{\n \r}^2_2}.
\end{flalign}Applying the Gronwall inequality to the above inequality,~it follows from~(\ref{eq:e1t})~and~(\ref{fl:e3t})~that
\begin{flalign}
  &\xkh{\norm{\n v}^2_2+\norm{\n \r}^2_2}(t)+\ds{\xkh{\norm{\p_t v}^2_2+\norm{\la v}^2_2+\norm{\p_t \r}^2_2+\norm{\la \r}^2_2}}\nonumber\\
  &\qquad\leq \exp\dkh{C\ds{\xkh{\norm{v}^2_2\norm{\n v}^2_2+\norm{\p_z v}^2_2 \norm{\n \p_z v}^2_2+\norm{\p_z \r}^2_2 \norm{\n \p_z \r}^2_2+1}}}\nonumber\\
  &\qquad\quad\times\xkh{\norm{\n v_0}^2_2+\norm{\n \r_0}^2_2} \leq \eta_4(t),\label{fl:e4t}
\end{flalign}where
\begin{equation*}
  \eta_4(t)=e^{C(t+2)\xkh{\eta^2_1+\eta^2_3(t)+1}}\xkh{\norm{v_0}^2_{H^1}+\norm{\r_0}^2_{H^1}}.
\end{equation*}

Based on the above energy estimates, we give the proof of Theorem~\ref{th:vrt}.
\begin{proof}[Proof of Theorem~\ref{th:vrt}.]
Adding~(\ref{eq:e1t})~and~(\ref{fl:e4t}),~we have
\begin{equation*}
  \sup_{0 \leq s \leq t}\xkh{\norm{v}^2_{H^1}+\norm{\r}^2_{H^1}}(s)+\ds{\xkh{\norm{\p_t v}^2_2+\norm{\n v}^2_{H^1}+\norm{\p_t \r}^2_2+\norm{\n \r}^2_{H^1}}} \leq \eta_1+\eta_4(t),
\end{equation*}where~$\eta_4(t)$~is a nonnegative continuously increasing function defined on $[0,\infty)$.~For any~$T>0$,~the following estimate holds
\begin{equation*}
  \sup_{0 \leq t \leq T}\xkh{\norm{v}^2_{H^1}+\norm{\r}^2_{H^1}}(t)
  +\int^T_0{\xkh{\norm{\p_t v}^2_2+\norm{\n v}^2_{H^1}+\norm{\p_t \r}^2_2+\norm{\n \r}^2_{H^1}}}dt \leq \eta_1+\eta_4(T).
\end{equation*}In consequence,~the strong solution~$(v,\r)$~exists globally in time,~with~$(v,\r) \in C([0,T];H^1(\O)) \cap L^2([0,T];H^2(\O))$~and~$(\p_t v,\p_t \r) \in L^2([0,T];L^2(\O))$.

Moreover,~the continuous dependence on the initial data and uniqueness of strong solutions are due to the similar argument in Cao-Titi\cite{ct2007},~and so we omit the proof here.
\end{proof}

\section{Strong convergence for~$H^1$~initial data}
In this section,~assume that initial data~$(v_0,\r_0) \in H^1(\O)$ with
\begin{equation*}
  \dz{\nh \d v_0(x,y,z)}=0,~\textnormal{for all}~(x,y)\in M,
\end{equation*}we prove that the scaled Boussinesq equations~(\ref{eq:ptv})~strongly converge to the viscous primitive equations with density stratification~(\ref{eq:ptl})~as the aspect ration parameter~$\e$~goes to zero.

The following proposition is formally obtained by testing the scaled Boussinesq equations~(\ref{eq:ptv})~with~$(v,w,\r)$.~As for the rigorous justification for this proposition,~we refer to the work of Li-Titi\cite{lt2019}~and~Bardos \textit{et al.}\cite{ba2013}.

\begin{proposition}\label{po:vew}
Given a periodic function pair $(v_0,\r_0) \in H^1(\O)$ with
\begin{equation*}
  \dz{\nh \d v_0}=0~\textnormal{and}~w_0(x,y,z)=-\dk{\nh \d v_0(x,y,\xi)}.
\end{equation*}Suppose that~$(v_\e,w_\e,\r_\e)$~is a global weak solution of the system~(\ref{eq:ptv}),~satisfying the energy inequality (\ref{fl:vwr}),~and that~$(v,\r)$~is the unique global strong solution of the system~(\ref{eq:ptl}).~Then the following integral equality holds
\begin{flalign}
  &\xkh{\oo{\xkh{v_\e \d v+\e^2 w_\e w+\r_\e \r}}}(r)-\frac{\e^2}{2}\norm{w(r)}^2_2\nonumber\\
  &\qquad+\rt{\xkh{\n v_\e:\n v+\e^2\n w_\e \d \n w+\n \r_\e \d \n \r}}\nonumber\\
  &=\norm{v_0}^2_2+\rt{\zkh{-(u_\e \d \n)v_\e \d v-\e^2(u_\e \d \n w_\e)w-(u_\e \d \n \r_\e)\r}}\nonumber\\
  &\qquad+\frac{\e^2}{2}\norm{w_0}^2_2+\e^2\rt{\xkh{\dk{\p_t v(x,y,\xi,t)}} \d \nh W_\e}\nonumber\\
  &\qquad+\norm{\r_0}^2_2+\rt{\xkh{v_\e \d \p_t v+\r_\e \p_t \r+w_\e\r-\r_\e w}}, \label{fl:rew}
\end{flalign}for any~$r \in [0,\infty)$.
\end{proposition}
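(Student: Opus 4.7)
The plan is to derive~(\ref{fl:rew}) by testing the weak formulation of the scaled Boussinesq system~(\ref{eq:ptv}) against the triple $(\varphi_h,\varphi_3,\psi)=(v,w,\r)$ coming from the global strong solution of the limit system~(\ref{eq:ptl}), where $w=-\dk{\nh\d v(x,y,\xi,t)}$ is determined by $v$ through the divergence-free constraint. The crucial structural observation is that $(v,w)$ is automatically $3$D divergence-free, so it is an admissible test vector in Definition~1.1 (after a standard mollification), and the pressure contributions cancel after summing the two momentum identities,
\begin{equation*}
  \oo{\nh p_\e \d v}+\oo{\p_z p_\e\,w}=-\oo{p_\e\,(\nh\d v+\p_z w)}=0.
\end{equation*}
Theorem~\ref{th:vrt} supplies $(v,\r)\in C([0,T];H^1(\O))\cap L^2(0,T;H^2(\O))$ with $(\p_t v,\p_t\r)\in L^2(0,T;L^2(\O))$, giving enough regularity to carry out the manipulations.

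Concretely, I would multiply the three equations of~(\ref{eq:ptv}) by $v$, $w$, and $\r$ respectively, integrate over $\O\times(0,r)$, and integrate by parts in space on the dissipation terms to produce the gradient-gradient integrals on the left-hand side of~(\ref{fl:rew}). Integration by parts in time on the $\p_t v_\e\d v$ and $\p_t\r_\e\,\r$ terms (using $v_\e(0)=v_0=v(0)$ and $\r_\e(0)=\r_0=\r(0)$) yields the boundary values $\oo{v_\e(r)\d v(r)}$, $\oo{\r_\e(r)\r(r)}$, the data $\norm{v_0}^2_2,\norm{\r_0}^2_2$, and the coupling integrals $\rt{v_\e\d\p_t v}$ and $\rt{\r_\e\p_t\r}$. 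Moving the convective and stratification terms to the right-hand side produces the contributions $\rt{(u_\e\d\n)v_\e\d v}$, $\e^2\rt{(u_\e\d\n w_\e)w}$, $\rt{(u_\e\d\n\r_\e)\r}$, and $\rt{\r_\e w-w_\e\r}$ appearing in~(\ref{fl:rew}).

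The main obstacle is the vertical-momentum term $\e^2\rt{\p_t w_\e\,w}$, since $w$ is not governed by its own evolution equation and $\p_t w$ need not lie in $L^2(\O\times(0,T))$ a priori. The remedy is to integrate by parts in time,
\begin{equation*}
  \e^2\rt{\p_t w_\e\,w}=\e^2\oo{w_\e(r)w(r)}-\e^2\norm{w_0}^2_2-\e^2\rt{w_\e\,\p_t w},
\end{equation*}
split $w_\e=W_\e+w$, use $\e^2\rt{w\,\p_t w}=\frac{\e^2}{2}\bigl(\norm{w(r)}^2_2-\norm{w_0}^2_2\bigr)$, and, invoking $\p_t w=-\dk{\nh\d\p_t v(x,y,\xi,t)}$ together with integration by parts in $(x,y)$ under periodicity, rewrite
\begin{equation*}
  \e^2\rt{W_\e\,\p_t w}=\e^2\rt{\xkh{\dk{\p_t v(x,y,\xi,t)}}\d\nh W_\e}.
\end{equation*}
The first splitting accounts for the $-\frac{\e^2}{2}\norm{w(r)}^2_2$ on the left-hand side of~(\ref{fl:rew}) and the reduction of $\e^2\norm{w_0}^2_2$ to $\frac{\e^2}{2}\norm{w_0}^2_2$ on the right-hand side, while the second formula produces the last term of~(\ref{fl:rew}); both sides are well defined because $\p_t v\in L^2(0,T;L^2(\O))$ and $\nh W_\e\in L^2(0,T;L^2(\O))$.

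For rigorous justification, I would replace $(v,w,\r)$ by its space-time mollification, multiply by a smooth cutoff $\chi_r$ of $\mathbf{1}_{[0,r]}$ to obtain an admissible test pair in $C^\infty_c(\overline{\O}\times[0,\infty))$, apply Definition~1.1, and pass to the limit first in the mollification parameter using Theorem~\ref{th:vrt} and the uniform estimate~(\ref{fl:vwr}), then send $\chi_r\to\mathbf{1}_{[0,r]}$ and use the weak continuity of $t\mapsto(v_\e(t),\e w_\e(t),\r_\e(t))$ in $L^2(\O)$ to obtain~(\ref{fl:rew}) at every $r\in[0,\infty)$. This is essentially the strategy employed in Li-Titi~\cite{lt2019} and Bardos \textit{et al.}~\cite{ba2013}.
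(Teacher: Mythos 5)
Your proposal is correct and follows essentially the same route as the paper: the paper obtains (\ref{fl:rew}) precisely by (formally) testing the scaled system (\ref{eq:ptv}) with $(v,w,\r)$, handling the $\e^2\p_t w_\e\,w$ term through integration by parts in time, the splitting $w_\e=W_\e+w$, and the identity $\p_t w=-\dk{\nh\d\p_t v}$, with the rigorous justification delegated to the mollification/cutoff argument of Li--Titi and Bardos \textit{et al.}, exactly as you describe. Your bookkeeping of the pressure cancellation, the $-\frac{\e^2}{2}\norm{w(r)}^2_2$ and $\frac{\e^2}{2}\norm{w_0}^2_2$ terms, and the term $\e^2\rt{\xkh{\dk{\p_t v}}\d\nh W_\e}$ reproduces the stated identity correctly.
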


With the help of this proposition,~we can estimate the difference function~$(V_\e,W_\e,\g_\e)$.

\begin{proposition}\label{po:I1t}
Let~$(V_\e,W_\e,\g_\e)=(v_\e-v,w_\e-w,\r_\e-\r)$.~Under the same assumptions as in Proposition~\ref{po:vew},~the following estimate holds
\begin{equation*}
  \sup_{0 \leq s \leq t}\xkh{\norm{(V_\e,\e W_\e,\g_\e)}^2_2}(s)+\ds{\norm{\n(V_\e,\e W_\e,\g_\e)}^2_2} \leq \e^2 \mathcal{K}_1(t),
\end{equation*}for any~$t \in [0,\infty)$,~where
\begin{equation*}
  \mathcal{K}_1(t)=Ce^{C\eta^2_4(t)}\zkh{\eta_4(t)+\eta^2_4(t)+\xkh{\norm{v_0}^2_2+\e^2\norm{w_0}^2_2+\norm{\r_0}^2_2}^2}.
\end{equation*}Here~$C$~is a positive constant that does not depend on~$\e$.
\end{proposition}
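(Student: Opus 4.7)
The plan is to combine the weak-solution energy inequality (\ref{fl:vwr}), the strong-solution energy identity, and the cross-term identity (\ref{fl:rew}) into a Gr\"onwall-ready differential inequality for
\begin{equation*}
E(r) := \|(V_\e,\e W_\e,\gamma_\e)\|_2^2(r) + 2\int_0^r \|\n(V_\e,\e W_\e,\gamma_\e)\|_2^2\,dt.
\end{equation*}
First I would derive the strong-solution energy identity by testing the first equation of (\ref{eq:ptl}) with $v$ and the third with $\rho$: the hydrostatic relation $\p_z p+\rho=0$ and incompressibility produce the buoyancy work $\int \nh p\cdot v = -\int p\,\nh\cdot v = \int p\,\p_z w = -\int\p_z p\cdot w = \int\rho w$, which cancels exactly the stratification source $-\int w\rho$ in the $\rho$-equation, yielding
\begin{equation*}
\tfrac12(\|v\|_2^2+\|\rho\|_2^2)(r) + \int_0^r(\|\n v\|_2^2+\|\n\rho\|_2^2)\,dt = \tfrac12(\|v_0\|_2^2+\|\rho_0\|_2^2).
\end{equation*}
This is rigorous thanks to the regularity $(v,\rho)\in L^\infty_t H^1\cap L^2_t H^2$, $\p_t(v,\rho)\in L^2_t L^2$ supplied by Theorem~\ref{th:vrt}.

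Next I would expand
\begin{equation*}
\tfrac12\|(V_\e,\e W_\e,\gamma_\e)\|_2^2 = \tfrac12\|(v_\e,\e w_\e,\rho_\e)\|_2^2 + \tfrac12\|(v,\e w,\rho)\|_2^2 - \int(v_\e\cdot v + \e^2 w_\e w + \rho_\e\rho),
\end{equation*}
and the analogous expansion for $\int_0^r\|\n(\cdot)\|_2^2$, and substitute (\ref{fl:vwr}), the strong-solution identity, and (\ref{fl:rew}). The crucial algebraic step is to simplify the combination $v_\e\p_t v+\rho_\e\p_t\rho+w_\e\rho-\rho_\e w$ in (\ref{fl:rew}) by substituting $\p_t v$, $\p_t\rho$ from (\ref{eq:ptl}); the resulting pressure-buoyancy pair $-v_\e\cdot\nh p+w_\e\rho = -u_\e\cdot\n p$ vanishes upon integration by $\n\cdot u_\e=0$, producing
\begin{equation*}
\int(v_\e\p_t v+\rho_\e\p_t\rho+w_\e\rho-\rho_\e w) = -\int(\n v_\e:\n v+\n\rho_\e\cdot\n\rho) - \int(v_\e\cdot(u\cdot\n)v+\rho_\e\,u\cdot\n\rho).
\end{equation*}
After this substitution, every initial-data term and every $\tfrac{\e^2}{2}\|w(r)\|_2^2$ contribution cancels, and the residual inequality reduces to
\begin{equation*}
E(r) \le -2\e^2\int_0^r(\n W_\e,\n w)\,dt - 2\e^2\int_0^r\int(\dk{\p_t v})\cdot\nh W_\e + 2\int_0^r\mathcal{N}(t)\,dt,
\end{equation*}
where $\mathcal{N}$ collects the trilinears $(u_\e\cdot\n)v_\e\cdot v + \e^2(u_\e\cdot\n w_\e)w + (u_\e\cdot\n\rho_\e)\rho + v_\e\cdot(u\cdot\n)v + \rho_\e\,u\cdot\n\rho$.

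The explicit $\e^2$-remainders are straightforward: $|2\e^2(\n W_\e,\n w)| \le \e^2\|\n W_\e\|_2^2 + \e^2\|\n w\|_2^2$, with the first absorbed into the $2\e^2\|\n W_\e\|_2^2$ on the LHS and the second controlled by $C\e^2\|v\|_{H^2}^2 \le C\e^2\eta_4(r)$ via $w=-\int_0^z\nh\cdot v$. The $\p_t v$-mismatch is split by Young as $\tfrac14\e^2\int_0^r\|\nh W_\e\|_2^2$ (also absorbed) plus $C\e^2\int_0^r\|\p_t v\|_2^2 \le C\e^2\eta_4(T)$ by Theorem~\ref{th:vrt}. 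For the trilinear residual $\mathcal{N}$ I would substitute $(v_\e,w_\e,\rho_\e)=(V_\e+v,W_\e+w,\gamma_\e+\rho)$ and integrate by parts so that no derivative rests on the low-regularity difference, using $\n\cdot u_\e=0$ to kill pure $(v,v,v)$ pieces and the representation $W_\e = -\int_0^z\nh\cdot V_\e$ so that $W_\e$ is implicitly controlled by $\nh V_\e$; each surviving piece is then bounded by the anisotropic estimate of Lemma~\ref{le:phi} in the form $\tfrac18\|\n(V_\e,\e W_\e,\gamma_\e)\|_2^2 + C\Phi(t)\|(V_\e,\e W_\e,\gamma_\e)\|_2^2$, with $\Phi(t)=C(1+\|v\|_2^2\|\n v\|_2^2+\|\p_z v\|_2^2\|\n\p_z v\|_2^2+\|\p_z\rho\|_2^2\|\n\p_z\rho\|_2^2+\cdots)$ integrable on $[0,T]$ by the Section~3 bounds $\eta_1,\eta_3,\eta_4$.

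The main obstacle is precisely the hydrostatic-buoyancy cancellation $-v_\e\cdot\nh p + w_\e\rho = -u_\e\cdot\n p = 0$: without this algebraic identity the stratification mismatch $\int(w_\e\rho-\rho_\e w)$ --- which by integration by parts equals $-\int V_\e\cdot\int_0^z\nh\rho - \int\nh\gamma_\e\cdot\int_0^z v$, hence is bounded only by $\|V_\e\|_2\|\n\rho\|_2+\|\n\gamma_\e\|_2\|v\|_2$ whose time-integral carries no $\e^2$-weight --- would contribute an $O(1)$ residue and destroy the desired convergence rate. Once this cancellation is performed, Gr\"onwall's inequality applied to the resulting differential estimate $E(r) \le C\e^2[\eta_4(r)+\eta_4^2(r)] + C\int_0^r\Phi(t)E(t)\,dt$ yields the stated bound with $\mathcal{K}_1(t)=Ce^{C\eta_4^2(t)}[\eta_4(t)+\eta_4^2(t)+(\|v_0\|_2^2+\e^2\|w_0\|_2^2+\|\rho_0\|_2^2)^2]$.
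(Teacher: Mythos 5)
Your proposal is correct and follows essentially the same route as the paper: the paper's intermediate identity (\ref{fl:udn}) (testing the limit system (\ref{eq:ptl}) against $(v_\e,w_\e,\r_\e)$) is exactly your substitution of $\p_t v,\p_t\r$ into the last term of (\ref{fl:rew}), producing the same cancellation of the buoyancy/stratification cross terms, the same residual terms ($G_1$--$G_4$ in the paper), and the same treatment via Lemma~\ref{le:phi}, the Section~3 bounds $\eta_1,\eta_3,\eta_4$, the $\e$-weighted energy inequality (\ref{fl:e22}), and Gr\"onwall. The resulting bound, including the $\e^2$ rate and the form of $\mathcal{K}_1$, matches the paper's.
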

\begin{proof}[Proof.]
Multiplying the first three equation in system~(\ref{eq:ptl})~by~$v_\e$,~$w_\e$~and~$\r_\e$~respectively,~integrating over~$\O\times(0,r)$,~then it follows from integration by parts that
\begin{flalign}
  &\rt{\xkh{v_\e \d \p_t v+\r_\e \p_t \r+\n v_\e:\n v+\n \r_\e \d \n \r}}\nonumber\\
  &\qquad=\rt{\zkh{w\r_\e-\r w_\e-(u \d \n)v \d v_\e-(u \d \n \r)\r_\e}}.\label{fl:udn}
\end{flalign}Replacing~$(v_\e,w_\e,\r_\e)$~with~$(v,w,\r)$,~a similar argument gives
\begin{flalign}
  &\frac{1}{2}\xkh{\norm{v(r)}^2_2+\norm{\r(r)}^2_2}+\dt{\xkh{\norm{\n v}^2_2+\norm{\n \r}^2_2}}\nonumber\\
  &\qquad=\frac{1}{2}\xkh{\norm{v_0}^2_2+\norm{\r_0}^2_2}.\label{fl:rtw}
\end{flalign}Thanks to Remark~\ref{re:lh},~the weak solution~$(v_\e,w_\e,\r_\e)$~of the system~(\ref{eq:ptv})~satisfies the following energy inequality
\begin{flalign}
  &\frac{1}{2}\xkh{\norm{v_\e(r)}^2_2+\e^2\norm{w_\e(r)}^2_2+\norm{\r_\e(r)}^2_2}\nonumber\\
  &\qquad\quad+\dt{\xkh{\norm{\n v_\e}^2_2+\e^2\norm{\n w_\e}^2_2+\norm{\n \r_\e}^2_2}}\nonumber\\
  &\qquad\leq\frac{1}{2}\xkh{\norm{v_0}^2_2+\e^2\norm{w_0}^2_2+\norm{\r_0}^2_2}.\label{fl:e22}
\end{flalign}
Subtracting the sum of~(\ref{fl:rew})~and~(\ref{fl:udn})~from the sum of~(\ref{fl:rtw})~and~(\ref{fl:e22}),~we have
\begin{flalign}
  &\frac{1}{2}\xkh{\norm{V_\e(r)}^2_2+\e^2\norm{W_\e(r)}^2_2+\norm{\g_\e(r)}^2_2}\nonumber\\
  &\qquad\quad+\dt{\xkh{\norm{\n V_\e}^2_2+\e^2\norm{\n W_\e}^2_2+\norm{\n \g_\e}^2_2}}\nonumber\\
  &\qquad\leq\rt{\zkh{(u_\e \d \n\r_\e)\r+(u \d \n\r)\r_\e}}\nonumber\\
  &\qquad\quad+\e^2\rt{\zkh{-\xkh{\dk{\p_t v(x,y,\xi,t)}} \d \nh W_\e-\n w \d \n W_\e}}\nonumber\\
  &\qquad\quad+\rt{\zkh{(u_\e \d \n)v_\e \d v+(u \d \n)v \d v_\e}}\nonumber\\
  &\qquad\quad+\e^2\rt{(u_\e \d \n w_\e)w}=:G_1+G_2+G_3+G_4.\label{fl:Ver}
\end{flalign}Firstly,~we estimate the integral term $G_1$ on the right-hand side of~(\ref{fl:Ver}). Using the H\"{o}lder inequality,~Lemma~\ref{le:phi} and Young inequality gives
\begin{flalign}
  G_1:&=\rt{\zkh{(u_\e \d \n\r_\e)\r+(u \d \n\r)\r_\e}}\nonumber\\
  &=\rt{\zkh{\xkh{V_\e \d \nh\g_\e}\r-(\p_z W_\e) \g_\e \r-W_\e \g_\e \p_z\r}}\nonumber\\
  &=\rt{\zkh{\xkh{V_\e \d \nh\g_\e}\r+(\nh \d V_\e) \g_\e \r}}\nonumber\\
  &\quad+\rt{\g_\e(\p_z\r)\xkh{\dk{(\nh \d V_\e)}}}\nonumber\\
  &=\rt{\xkh{|V_\e| |\nh\g_\e| |\r|+|\nh V_\e| |\g_\e| |\r|}}\nonumber\\
  &\quad+\mm{\xkh{\dz{|\nh V_\e|}}\xkh{\dz{|\g_\e||\p_z\r|}}}\nonumber\\
  &\leq C\dt{\norm{\n \r}^2_2\norm{\la \r}^2_2\xkh{\norm{V_\e}^2_2+\norm{\g_\e}^2_2}}\nonumber\\
  &\quad+\frac{1}{8}\dt{\xkh{\norm{\n V_\e}^2_2+\norm{\n \g_\e}^2_2}},\label{fl:ge2}
\end{flalign}note that the divergence-free condition, Sobolev embedding $H^1 \subset L^6$ and Poincar\'{e} inequality are used. By virtue of the H\"{o}lder inequality and Young inequality, we obtain
\begin{flalign}
  G_2:&=\e^2\rt{\zkh{-\xkh{\dk{\p_t v(x,y,\xi,t)}} \d \nh W_\e-\n w \d \n W_\e}}\nonumber\\
  &\leq C\e^2\dt{\xkh{\norm{\p_t v}^2_2+\norm{\nh w}^2_2+\norm{\p_z w}^2_2}}+\frac{1}{8}\dt{\e^2\norm{\n W_\e}^2_2}\nonumber\\
  &\leq C\e^2\dt{\zkh{\norm{\p_t v}^2_2+\oo{\xkh{\dk{\nh(\nh \d v)}}^2}}}\nonumber\\
  &\quad+C\e^2\dt{\norm{\nh v}^2_2}+\frac{1}{8}\dt{\e^2\norm{\n W_\e}^2_2}\nonumber\\
  &\leq C\e^2\dt{\xkh{\norm{\p_t v}^2_2+\norm{\la v}^2_2}}+\frac{1}{8}\dt{\e^2\norm{\n W_\e}^2_2}.~\label{fl:nWe}
\end{flalign}By the similar method as the integral term $G_1$ on the right-hand side of~(\ref{fl:Ver}),~the integral terms $G_3$ and $G_4$ on the right-hand side of~(\ref{fl:Ver}) can be estimated as
\begin{flalign}
  G_3:&=\rt{\zkh{(u_\e \d \n)v_\e \d v+(u \d \n)v \d v_\e}}\nonumber\\
  &\leq C\dt{\norm{\n v}^2_2 \norm{\la v}^2_2 \norm{V_\e}^2_2}+\frac{1}{8}\dt{\norm{\n V_\e}^2_2}\label{fl:nve}
\end{flalign}and
\begin{flalign}
  G_4:&=\e^2\rt{(u_\e \d \n w_\e)w}\nonumber\\
  &\leq C\e^2\dt{\xkh{\norm{v_\e}^2_2 \norm{\n v_\e}^2_2+\norm{\n v}^2_2 \norm{\la v}^2_2+\e^4\norm{w_\e}^2_2 \norm{\n w_\e}^2_2}}\nonumber\\
  &\quad+\frac{1}{8}\dt{\xkh{\norm{\n V_\e}^2_2+\e^2\norm{\n W_\e}^2_2}},\label{fl:e2r}
\end{flalign}respectively.

Adding~(\ref{fl:ge2}),~(\ref{fl:nWe}),~(\ref{fl:nve}),~and~(\ref{fl:e2r})~yields
\begin{flalign*}
  h(t):&=\xkh{\norm{(V_\e,\g_\e)}^2_2+\e^2\norm{W_\e}^2_2}(t)+\ds{\xkh{\norm{\n(V_\e,\g_\e)}^2_2+\e^2\norm{\n W_\e}^2_2}}\\
  &\leq C\ds{\norm{\n v}^2_2 \norm{\la v}^2_2 \norm{V_\e}^2_2}+C\e^2\ds{\xkh{\norm{\p_t v}^2_2+\norm{\la v}^2_2}}\\
  &\quad+C\e^2\ds{\xkh{\norm{v_\e}^2_2 \norm{\n v_\e}^2_2+\norm{\n v}^2_2 \norm{\la v}^2_2+\e^4\norm{w_\e}^2_2 \norm{\n w_\e}^2_2}}\\
  &\quad+C\ds{\norm{\n \r}^2_2 \norm{\la \r}^2_2 \xkh{\norm{V_\e}^2_2+\norm{\g_\e}^2_2}}=:H(t),
\end{flalign*}for a.e.~$t \in [0,\infty)$.~Taking the derivative of~$H(t)$~with respect to~$t$,~we obtain
\begin{flalign*}
  H'(t)&\leq C\e^2\xkh{\norm{v_\e}^2_2 \norm{\n v_\e}^2_2+\norm{\n v}^2_2 \norm{\la v}^2_2+\e^4\norm{w_\e}^2_2 \norm{\n w_\e}^2_2}\\
  &\quad+C\xkh{\norm{\n v}^2_2 \norm{\la v}^2_2+\norm{\n \r}^2_2 \norm{\la \r}^2_2} \xkh{\norm{V_\e}^2_2+\norm{\g_\e}^2_2}+C\e^2\xkh{\norm{\p_t v}^2_2+\norm{\la v}^2_2}\\
  &\leq C\xkh{\norm{\n v}^2_2 \norm{\la v}^2_2+\norm{\n \r}^2_2 \norm{\la \r}^2_2}H(t)+C\e^2\xkh{\norm{\p_t v}^2_2+\norm{\la v}^2_2}\\
  &\quad+C\e^2\xkh{\norm{v_\e}^2_2 \norm{\n v_\e}^2_2+\norm{\n v}^2_2 \norm{\la v}^2_2+\e^4\norm{w_\e}^2_2 \norm{\n w_\e}^2_2}.
\end{flalign*}Applying the Gronwall inequality to the above inequality,~it follows from~(\ref{fl:e4t})~and~(\ref{fl:e22})~that
\begin{flalign*}
  h(t):&=\xkh{\norm{(V_\e,\g_\e)}^2_2+\e^2\norm{W_\e}^2_2}(t)+\ds{\xkh{\norm{\n(V_\e,\g_\e)}^2_2+\e^2\norm{\n W_\e}^2_2}}\\
  &\leq C\e^2\exp\dkh{C\ds{\xkh{\norm{\n v}^2_2 \norm{\la v}^2_2+\norm{\n \r}^2_2 \norm{\la \r}^2_2}}}\\
  &\quad\times\ds{\xkh{\norm{\p_t v}^2_2+\norm{\la v}^2_2+\norm{v_\e}^2_2 \norm{\n v_\e}^2_2+\norm{\n v}^2_2 \norm{\la v}^2_2+\e^4\norm{w_\e}^2_2 \norm{\n w_\e}^2_2}}\\
  &\leq C\e^2 e^{C\eta^2_4(t)}\zkh{\eta_4(t)+\eta^2_4(t)+\xkh{\norm{v_0}^2_2+\e^2\norm{w_0}^2_2+\norm{\r_0}^2_2}^2},
\end{flalign*}note that the fact that~$H(0)=0$.~This completes the proof.
\end{proof}

Based on Proposition \ref{po:I1t}, the proof of Theorem~\ref{th:I1t} is shown below.
\begin{proof}[Proof of Theorem~\ref{th:I1t}.]
For any~$T>0$,~according to Proposition~\ref{po:I1t},~the following estimate holds
\begin{equation*}
  \sup_{0 \leq t \leq T}\xkh{\norm{(V_\e,\e W_\e,\g_\e)}^2_2}(t)
  +\int^T_0{\norm{\n(V_\e,\e W_\e,\g_\e)}^2_2}dt \leq \e^2\widetilde{\mathcal{K}_1}(T),
\end{equation*}where
\begin{equation*}
  \widetilde{\mathcal{K}_1}(T)=Ce^{C\eta^2_4(T)}\zkh{\eta_4(T)+\eta^2_4(T)+\xkh{\norm{v_0}^2_2+\norm{w_0}^2_2+\norm{\r_0}^2_2}^2}.
\end{equation*}Here~$C$~is a positive constant that does not depend on~$\e$.~It deduces from the above estimate that
\begin{gather*}
  (v_\e,\e w_\e,\r_\e) \rightarrow (v,0,\r),~in~L^{\infty}\xkh{[0,T];L^2(\O)},\\
  (\n v_\e,\e \n w_\e,\n \r_\e,w_\e) \rightarrow (\n v,0,\n \r,w),~in~L^2\xkh{[0,T];L^2(\O)}.
\end{gather*}Obviously,~the rate of convergence is of the order~$O(\e)$.~The theorem is thus proved.
\end{proof}

\section{Strong convergence for~$H^2$~initial data}
In this section,~assume that initial data~$(v_0,\r_0) \in H^2(\O)$, where initial velocity $v_0$ satisfies
\begin{equation*}
  \dz{\nh \d v_0(x,y,z)}=0,~\textnormal{for all}~(x,y)\in M,
\end{equation*}we prove that the scaled Boussinesq equations~(\ref{eq:ptv})~strongly converge to the viscous primitive equations with density stratification~(\ref{eq:ptl})~as the aspect ration parameter~$\e$~goes to zero.~In this case,~there is a unique local strong solution~$(v_\e,w_\e,\r_\e)$~to the system~(\ref{eq:ptv}),~subject to the boundary and initial conditions~(\ref{ga:are})-(\ref{ga:vew})~and symmetry condition~(\ref{eq:eve}).~Denote by~$T^*_\e$~the maximal existence time of this local strong solution.

Let
\begin{gather*}
  (U_\e,\g_\e,P_\e)=(V_\e,W_\e,\g_\e,P_\e), \\
  (V_\e,W_\e,\g_\e,P_\e)=(v_\e-v,w_\e-w,\r_\e-\r,p_\e-p).
\end{gather*}We subtract the system~(\ref{eq:ptl})~from the system~(\ref{eq:ptv})~and then lead to the following system
\begin{flalign}
  &\p_t V_\e-\la V_\e+(U_\e \d \n)V_\e+(u \d \n)V_\e+(U_\e \d \n)v+\nh P_\e=0, \label{fl:Ve0}\\
  &\e^2(\p_t W_\e-\la W_\e+U_\e \d \n W_\e+U_\e \d \n w+u \d \n W_\e)+\p_z P_\e\nonumber\\
  &+\g_\e+\e^2(\p_t w-\la w+u \d \n w)=0, \label{fl:dnw}\\
  &\p_t \g_\e-\la \g_\e+U_\e \d \n \g_\e+U_\e \d \n \r+u \d \n \g_\e-W_\e=0, \label{fl:nge}\\
  &\nh \d V_\e+\p_z W_\e=0, \label{fl:zWe}
\end{flalign}defined on~$\O \times (0,T^*_\e)$.

\begin{proposition}\label{po:I1T}
Suppose that~$(v_0,\r_0) \in H^2(\O)$,~with~$\dz{\nh \d v_0}=0$.~Then the system~(\ref{fl:Ve0})-(\ref{fl:zWe})~has the following basic energy estimate
\begin{equation*}
  \sup_{0 \leq s \leq t}\xkh{\norm{(V_\e,\e W_\e,\g_\e)}^2_2}(s)+\ds{\norm{\n(V_\e,\e W_\e,\g_\e)}^2_2} \leq \e^2\mathcal{K}_1(t),
\end{equation*}for any~$t \in [0,T^*_\e)$,~where
\begin{equation*}
  \mathcal{K}_1(t)=Ce^{C\eta^2_4(t)}\zkh{\eta_4(t)+\eta^2_4(t)+\xkh{\norm{v_0}^2_2+\e^2\norm{w_0}^2_2+\norm{\r_0}^2_2}^2}.
\end{equation*}Here~$C$~is a positive constant that does not depend on~$\e$.
\end{proposition}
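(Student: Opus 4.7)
The plan is to differentiate the energy functional for $(V_\e,\e W_\e,\g_\e)$ directly, which is now legitimate because on $[0,T^*_\e)$ the strong solution $(v_\e,w_\e,\r_\e)$ is regular enough to serve as a test function, as is the global strong solution $(v,w,\r)$ supplied by Theorem~\ref{th:vrt}. Concretely, I would take the $L^2(\O)$ inner product of~(\ref{fl:Ve0}), (\ref{fl:dnw}), (\ref{fl:nge}) with $V_\e$, $W_\e$, $\g_\e$ respectively, integrate by parts, and sum. Three structural cancellations do the heavy lifting: (i) the self-advection terms $\oo{(U_\e\d\n)V_\e\d V_\e}$, $\e^2\oo{(U_\e\d\n W_\e)W_\e}$, $\oo{(U_\e\d\n\g_\e)\g_\e}$ together with their $u$-counterparts vanish by the divergence-free conditions on $U_\e$ and $u$; (ii) the pressure contribution $\oo{\nh P_\e\d V_\e}+\oo{\p_z P_\e W_\e}$ vanishes by~(\ref{fl:zWe}); and (iii) the stratification coupling $\oo{\g_\e W_\e}$ from~(\ref{fl:dnw}) cancels against $-\oo{W_\e\g_\e}$ from~(\ref{fl:nge}).

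The surviving right-hand side consists of three ``transfer'' terms $\oo{(U_\e\d\n)v\d V_\e}$, $\e^2\oo{(U_\e\d\n w)W_\e}$, $\oo{(U_\e\d\n\r)\g_\e}$, together with one $\e^2$-small ``consistency'' term $\e^2\oo{(\p_t w-\la w+u\d\n w)W_\e}$. The transfer terms will be estimated by splitting $U_\e\d\n=V_\e\d\nh+W_\e\p_z$, using the hydrostatic identity $W_\e=-\dk{\nh\d V_\e}$ on the vertical piece, and then invoking Lemma~\ref{le:phi}, the Sobolev embedding $H^1\subset L^6$, Poincar\'e's inequality and Young's inequality; the resulting bounds exactly reproduce the estimates obtained for $G_1$, $G_3$, $G_4$ in Proposition~\ref{po:I1t}, featuring factors such as $\norm{\n v}^2_2\norm{\la v}^2_2$ and $\norm{\n\r}^2_2\norm{\la\r}^2_2$. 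The main obstacle is the consistency term, because its smallness cannot be extracted from $(V_\e,W_\e,\g_\e)$ but must come entirely from the $\e^2$ prefactor and from regularity of $(v,\r)$. I would handle it by substituting $w=-\dk{\nh\d v(x,y,\xi,\d)}$ and $\p_t w=-\dk{\nh\d\p_t v(x,y,\xi,\d)}$, integrating by parts horizontally to transfer $\nh$ onto $W_\e$, and then applying H\"older and Young to obtain
\begin{equation*}
  \e^2\oo{(\p_t w-\la w+u\d\n w)W_\e}\leq C\e^2\xkh{\norm{\p_t v}^2_2+\norm{\la v}^2_2+\norm{\n v}^2_2\norm{\la v}^2_2}+\tfrac18\e^2\norm{\n W_\e}^2_2,
\end{equation*}
which mirrors the estimate of $G_2$ in the proof of Proposition~\ref{po:I1t}.

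Absorbing the small gradient contributions into the dissipation yields a differential inequality of the form $h'(t)\leq A(t)h(t)+\e^2 B(t)$, where $h(t)=\norm{(V_\e,\e W_\e,\g_\e)}^2_2(t)$, the coefficient $A(t)$ depends only on $\norm{\n v}^2_2\norm{\la v}^2_2+\norm{\n\r}^2_2\norm{\la\r}^2_2$, and $B(t)$ involves $\norm{\p_t v}^2_2+\norm{\la v}^2_2$ together with the basic energy of $(v_\e,w_\e,\r_\e)$ controlled by~(\ref{fl:e22}). Since $V_\e(0)=W_\e(0)=\g_\e(0)=0$, Gronwall's inequality combined with the $H^1$ bound~(\ref{fl:e4t}) delivered by Theorem~\ref{th:vrt} and the energy inequality~(\ref{fl:e22}) produces exactly the claimed estimate, with the same function $\mathcal{K}_1(t)$ as in Proposition~\ref{po:I1t}.
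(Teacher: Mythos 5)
Your argument is essentially correct, but it is not the route the paper takes. The paper's entire proof of Proposition~\ref{po:I1T} is the observation that it is a \emph{direct consequence} of Proposition~\ref{po:I1t}: on $[0,T^*_\e)$ the local strong solution is in particular a weak solution satisfying the energy inequality (\ref{fl:vwr}), so the estimate already proved for weak solutions applies verbatim on the local existence interval, and no new computation is performed (the paper only remarks in passing that the bound ``can also be obtained by the energy method''). What you propose is precisely that unexecuted alternative: testing the difference system (\ref{fl:Ve0})--(\ref{fl:zWe}) with $(V_\e, W_\e,\g_\e)$, which is legitimate here because both $(v_\e,w_\e,\r_\e)$ and $(v,\r)$ are strong, and exploiting the cancellations you list (advection, pressure via (\ref{fl:zWe}), and the $\oo{\g_\e W_\e}$ coupling), leaving the three transfer terms and the $\e^2$ consistency term, which are then handled with Lemma~\ref{le:phi}, the vertical-integral representation of $W_\e$ and $w$, and Gronwall with zero initial data. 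This buys self-containedness -- it avoids the weak-formulation detour through Proposition~\ref{po:vew} (the Lions-type device needed because a weak solution cannot be tested against itself) -- at the cost of redoing all nonlinear estimates. Two small caveats: the surviving terms in your identity are not literally $G_1$, $G_3$, $G_4$ of Proposition~\ref{po:I1t} (those are written as mixed products of the $\e$-solution and the limit solution precisely because of the weak-formulation detour), though they coincide after the same cancellations and admit estimates of the same type; and your resulting constant need not be \emph{exactly} the paper's $\mathcal{K}_1(t)$ -- in particular the term $\xkh{\norm{v_0}^2_2+\e^2\norm{w_0}^2_2+\norm{\r_0}^2_2}^2$ enters the paper's $\mathcal{K}_1$ only through estimating $G_4$ by the scaled system's energy inequality (\ref{fl:e22}), a step your decomposition does not require -- but since this only enlarges the right-hand side, the stated bound with the stated $\mathcal{K}_1(t)$ still follows.
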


It is important to note that the Proposition~\ref{po:I1T}~is a direct consequence of Proposition~\ref{po:I1t}.~Moreover,~the basic energy estimate on the system~(\ref{fl:Ve0})-(\ref{fl:zWe})~can also be obtained by the energy method.~The strong solution~$(v_\e,w_\e,\r_\e)$~to the system~(\ref{eq:ptv})~is local,~so is this basic energy estimate.~In order to obtain the first order energy estimate for the system~(\ref{fl:Ve0})-(\ref{fl:zWe}),~we need to perform the second order energy estimate on the system~(\ref{eq:ptl}).

\begin{proposition}\label{po:e5t}
Suppose that~$(v_0,\r_0) \in H^2(\O)$,~with~$\dz{\nh \d v_0}=0$.~Then the system~(\ref{eq:ptl})~has the following second order energy estimate
\begin{equation*}
  \sup_{0 \leq s \leq t}\xkh{\norm{\la v}^2_2+\norm{\la \r}^2_2}(s)
  +\ds{\xkh{\norm{\n \p_t v}^2_2+\norm{\n \la v}^2_2+\norm{\n \p_t \r}^2_2+\norm{\n \la \r}^2_2}} \leq  \eta_5(t),
\end{equation*}for any~$t \in [0,\infty)$,~where
\begin{equation*}
  \eta_5(t)=e^{C(t+2)\xkh{\eta^2_4(t)+1}}\zkh{\norm{v_0}^2_{H^2}+\norm{\r_0}^2_{H^2}}.
\end{equation*}
\end{proposition}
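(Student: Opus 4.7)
The plan is to mirror the scheme used in Subsection~3.4 for the $H^1$ estimate, but one derivative higher. Concretely, I would test the reformulated equations~(\ref{fl:dkr}) and~(\ref{fl:lar}) against $-\la\p_t v+\la^2 v$ and $-\la\p_t\r+\la^2\r$, respectively. Under the periodic boundary and even/odd-in-$z$ symmetry assumptions, all boundary terms vanish when integrating by parts, and the linear part produces
\begin{equation*}
\frac{d}{dt}\xkh{\norm{\la v}_2^2+\norm{\la\r}_2^2}
+\norm{\n\p_t v}_2^2+\norm{\n\la v}_2^2+\norm{\n\p_t\r}_2^2+\norm{\n\la\r}_2^2
\end{equation*}
on the left-hand side. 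This is exactly the quantity we need to control, so the remaining work is to bound the nonlinear and pressure contributions on the right.

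The nonlinear terms split into three families, entirely analogous to $D_1,D_2,D_3$ in~(\ref{fl:lav}): the density stratification/pressure coupling $\int_0^z\nh\r$ and $\int_0^z\nh\d v$; the classical advection $(v\d\nh)v$ and $v\d\nh\r$; and the vertical advection $-(\int_0^z\nh\d v)\p_z v$ and $-(\int_0^z\nh\d v)\p_z\r$. Each term pairs with either $\la\p_t v$, $\la\p_t\r$, $\n\la v$, or $\n\la\r$ after one integration by parts. To estimate them, I would apply H\"older, the anisotropic Lemma~\ref{le:phi} to handle the $\dk{\cdot}$ factors, Sobolev embedding $H^1\hookrightarrow L^6$, Poincar\'e, and Young's inequality, absorbing a small fraction of $\norm{\n\p_t v}_2^2,\norm{\n\la v}_2^2,\norm{\n\p_t\r}_2^2,\norm{\n\la\r}_2^2$ into the left-hand side. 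Quantities like $\norm{v}_\infty$ will be bounded by $C\norm{v}_{H^2}^{1/2}\norm{v}_{H^3}^{1/2}$ or by $\norm{\la v}_2^{1/2}\norm{\n\la v}_2^{1/2}$, producing factors $C(1+\norm{\la v}_2^2+\norm{\la\r}_2^2)(\norm{\la v}_2^2+\norm{\la\r}_2^2)$ plus harmless lower-order terms, using the previously established bounds $\eta_1,\ldots,\eta_4$.

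The pressure term $\nh p_\gamma$ requires upgrading the estimate~(\ref{fl:s2M}) one derivative. From the same elliptic problem $-\la_h p_\gamma=\frac12\int_{-1}^1\nh\d[(\nh\d(v\otimes v))-\dk{\nh\r}]\,dz$, standard $L^2$ elliptic regularity gives $\norm{\n^2 p_\gamma}_{2,M}\leq C(\norm{(v\d\n)v}_{H^1}+\norm{\n\r}_{H^1})$, which by Sobolev embeddings and the Ladyzhenskaya-type estimates is controlled by the $H^2$-norm of $v$ and the $H^1$-norm of $\r$ times the dissipation, and can be absorbed in the same way.

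After collecting all estimates, I would obtain a differential inequality of the form
\begin{equation*}
\frac{d}{dt}\xkh{\norm{\la v}_2^2+\norm{\la\r}_2^2}+\frac{1}{2}\xkh{\norm{\n\p_t v}_2^2+\norm{\n\la v}_2^2+\norm{\n\p_t\r}_2^2+\norm{\n\la\r}_2^2}
\leq C\Phi(t)\xkh{\norm{\la v}_2^2+\norm{\la\r}_2^2}+C\Psi(t),
\end{equation*}
where $\Phi(t)$ and $\Psi(t)$ are time-integrable functions expressible in terms of the previously controlled norms (hence bounded by polynomials in $\eta_4(t)$). Gronwall's inequality then yields the stated bound with $\eta_5(t)=e^{C(t+2)(\eta_4^2(t)+1)}(\norm{v_0}_{H^2}^2+\norm{\r_0}_{H^2}^2)$. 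I expect the main obstacle to be the vertical-advection terms of the form $\oo{(\dk{\nh\d v})\p_z v\d\la\p_t v}$ and their $\la^2 v$ counterparts: here the anisotropic factor $\dk{\nh\d v}$ has only horizontal derivatives available, so one must carefully use Lemma~\ref{le:phi} to trade a vertical integral for an $L^\infty_z L^2_{xy}$ bound and then interpolate against the dissipation $\norm{\n\la v}_2$, while keeping the exponents matched so that Young's inequality absorbs the top-order norm.
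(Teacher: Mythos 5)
Your proposal follows essentially the same route as the paper: your multipliers $-\la\p_t v+\la^2 v=\la(\la v-\p_t v)$ and $-\la\p_t\r+\la^2\r=\la(\la\r-\p_t\r)$ are exactly the paper's test functions, producing the same energy identity, the same splitting into coupling and advection terms, the same anisotropic (Ladyzhenskaya-type) estimates, and the same Gronwall argument based on $\eta_4$. The only minor deviation is the pressure term: in the paper it drops out identically (since $p_\gamma$ is independent of $z$ and $\dz{\nh \d v}=\dz{\nh \d \p_t v}=\dz{\nh \d \la v}=0$), whereas you bound it through second-order elliptic regularity for $p_\gamma$, which is more work but also leads to an admissible Gronwall coefficient.
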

\begin{proof}[Proof.]
Taking the~$L^2(\O)$~inner product of the equation~(\ref{fl:dkr})~and~(\ref{fl:lar})~with~$\la\xkh{\la v-\p_t v}$~and~$\la$ $\xkh{\la \r-\p_t \r}$~respectively,~then it deduces from integration by parts that
\begin{flalign}
  \frac{d}{dt}&\xkh{\norm{\la v}^2_2+\norm{\la \r}^2_2}+\norm{\n \p_t v}^2_2+\norm{\n \la v}^2_2+\norm{\n \p_t \r}^2_2+\norm{\n \la \r}^2_2\nonumber\\
  &=\oo{\n\xkh{\dk{\nh \d v(x,y,\xi,t)}} \d \n\xkh{\la \r-\p_t \r}}\nonumber\\
  &\quad+\oo{\n\xkh{\dk{\nh \r(x,y,\xi,t)}}:\n\xkh{\p_t v-\la v}}\nonumber\\
  &\quad+\oo{\n\zkh{v \d \nh \r-\xkh{\dk{\nh \d v(x,y,\xi,t)}} \p_z \r} \d \n\xkh{\la \r-\p_t \r}}\nonumber\\
  &\quad+\oo{\n\zkh{\xkh{v \d \nh}v-\xkh{\dk{\nh \d v(x,y,\xi,t)}} \p_z v}:\n\xkh{\la v-\p_t v}}\nonumber\\
  &=:G_1+G_2+G_3+G_4.\label{fl:vpt}
\end{flalign}In order to estimate the first integral term~$G_1$~on the right-hand side of~(\ref{fl:vpt}),~we use the H\"{o}lder inequality and Young inequality to reach
\begin{flalign}
  G_1:&=\oo{\n\xkh{\dk{\nh \d v(x,y,\xi,t)}} \d \n\xkh{\la \r-\p_t \r}}\nonumber\\
  &=\oo{\nh\xkh{\dk{\nh \d v(x,y,\xi,t)}} \d \nh\xkh{\la \r-\p_t \r}}\nonumber\\
  &\quad+\oo{(\nh \d v)\xkh{\p_z \la \r-\p_z \p_t \r}}\nonumber\\
  &=\oo{\zkh{\dk{\nh \d \p_i v(x,y,\xi,t)}} \xkh{\p_i \la \r-\p_i \p_t \r}}\nonumber\\
  &\quad+\oo{(\nh \d v)\xkh{\p_z \la \r-\p_z \p_t \r}}\nonumber\\
  &\leq\mm{\xkh{\dz{|\p_i \nh v|}} \xkh{\dz{\xkh{|\p_i \p_t \r|+|\p_i \la \r|}}}}\nonumber\\
  &\quad+\oo{|\nh v| \xkh{|\p_z \p_t \r|+|\p_z \la \r|}}\nonumber\\
  &\leq C\xkh{\norm{\n v}^2_2+\norm{\la v}^2_2}
  +\frac{1}{8}\xkh{\norm{\n \p_t \r}^2_2+\norm{\n \la \r}^2_2}.\label{fl:G1r}
\end{flalign}For the second integral term~$G_2$~on the right-hand side of~(\ref{fl:vpt}),~using the same method as the integral term~$G_1$~gives
\begin{flalign}
  G_2:&=\oo{\n\xkh{\dk{\nh \r(x,y,\xi,t)}}:\n\xkh{\p_t v-\la v}}\nonumber\\
  &\leq C\xkh{\norm{\n \r}^2_2+\norm{\la \r}^2_2}
  +\frac{1}{8}\xkh{\norm{\n \p_t v}^2_2+\norm{\n \la v}^2_2}.\label{fl:G2v}
\end{flalign}To obtain the estimate on the third integral term~$G_3$~on the right-hand side of~(\ref{fl:vpt}),~applying the Lemma~\ref{le:psi}~and Young inequality yields
\begin{flalign}
  G_3:&=\oo{\n\zkh{v \d \nh \r-\xkh{\dk{\nh \d v(x,y,\xi,t)}} \p_z \r} \d \n\xkh{\la \r-\p_t \r}}\nonumber\\
  &=\oo{\n\xkh{u \d \n \r} \d \n\xkh{\la \r-\p_t \r}}\nonumber\\
  &=\oo{\xkh{\p_i u \d \n \r+u \d \p_i \n \r} \xkh{\p_i \la \r-\p_i \p_t \r}}\nonumber\\
  &\leq C\norm{\p_i \n v}^{1/2}_2 \norm{\p_i \la v}^{1/2}_2 \norm{\n \r}^{1/2}_2 \norm{\la \r}^{1/2}_2
  \xkh{\norm{\p_i \p_t \r}_2+\norm{\p_i \la \r}_2}\nonumber\\
  &\quad+C\norm{\n v}^{1/2}_2 \norm{\la v}^{1/2}_2 \norm{\p_i \n \r}^{1/2}_2 \norm{\p_i \la \r}^{1/2}_2
  \xkh{\norm{\p_i \p_t \r}_2+\norm{\p_i \la \r}_2}\nonumber\\
  &\leq C\xkh{\norm{\n v}^2_2\norm{\la v}^2_2+\norm{\n \r}^2_2\norm{\la \r}^2_2} \xkh{\norm{\la v}^2_2+\norm{\la \r}^2_2}\nonumber\\
  &\quad+\frac{1}{8}\xkh{\norm{\n \la v}^2_2+\norm{\n \p_t \r}^2_2+\norm{\n \la \r}^2_2}.\label{fl:G3r}
\end{flalign}As for the last integral term~$G_4$~on the right-hand side of~(\ref{fl:vpt}),~a similar argument~as the integral term $G_3$~leads to
\begin{flalign}
  G_4:&=\oo{\n\zkh{\xkh{v \d \nh}v-\xkh{\dk{\nh \d v(x,y,\xi,t)}} \p_z v}:\n\xkh{\la v-\p_t v}}\nonumber\\
  &=\oo{\n\zkh{(u \d \n)v} : \n\xkh{\la v-\p_t v}}\nonumber\\
  &\leq C\norm{\n v}^2_2\norm{\la v}^2_2\norm{\la v}^2_2+\frac{1}{8}\xkh{\norm{\n \p_t v}^2_2+\norm{\n \la v}^2_2}.\label{fl:G4v}
\end{flalign}Substituting~(\ref{fl:G1r})-(\ref{fl:G4v})~into~(\ref{fl:vpt}),~we have
\begin{flalign*}
  \frac{d}{dt}&\xkh{\norm{\la v}^2_2+\norm{\la \r}^2_2}+\frac{1}{2}\xkh{\norm{\n \p_t v}^2_2+\norm{\n \la v}^2_2+\norm{\n \p_t \r}^2_2+\norm{\n \la \r}^2_2}\nonumber\\
  &\leq C\xkh{\norm{\n v}^2_2\norm{\la v}^2_2+\norm{\n \r}^2_2\norm{\la \r}^2_2+1} \xkh{\norm{\la v}^2_2+\norm{\la \r}^2_2}.
\end{flalign*}Thanks to the Gronwall inequality,~it follows from~(\ref{fl:e4t})~that
\begin{flalign*}
  &\xkh{\norm{\la v}^2_2+\norm{\la \r}^2_2}(t)+\ds{\xkh{\norm{\n \p_t v}^2_2+\norm{\n \la v}^2_2+\norm{\n \p_t \r}^2_2+\norm{\n \la \r}^2_2}}\nonumber\\
  &\qquad\leq \exp\dkh{C\ds{\xkh{\norm{\n v}^2_2\norm{\la v}^2_2+\norm{\n \r}^2_2\norm{\la \r}^2_2+1}}}\xkh{\norm{\la v_0}^2_2+\norm{\la \r_0}^2_2}\nonumber\\
  &\qquad\leq Ce^{C(t+2)\xkh{\eta^2_4(t)+1}}\zkh{\norm{v_0}^2_{H^2}+\norm{\r_0}^2_{H^2}}.
\end{flalign*}~The proof is completed.
\end{proof}

With the help of Proposition \ref{po:e5t}, we can perform the first order energy estimate on the system (\ref{fl:Ve0})-(\ref{fl:zWe}).
\begin{proposition}\label{po:I2t}
Suppose that~$(v_0,\r_0) \in H^2(\O)$,~with~$\dz{\nh \d v_0}=0$.~Then there exists a small positive constant~$\beta_0$~such that the system~(\ref{fl:Ve0})-(\ref{fl:zWe})~has the following first order energy estimate
\begin{equation*}
  \sup_{0 \leq s \leq t}\xkh{\norm{\n(V_\e,\e W_\e,\g_\e)}^2_2}(s)
  +\ds{\norm{\la(V_\e,\e W_\e,\g_\e)}^2_2} \leq \e^2 \mathcal{K}_2(t),
\end{equation*}for any~$t \in [0,T^*_\e)$,~provided that
\begin{equation*}
  \sup_{0 \leq s \leq t}\xkh{\norm{\n (V_\e,\g_\e)}^2_2+\e^2\norm{\n W_\e}^2_2}(s) \leq \beta^2_0,
\end{equation*}where
\begin{equation*}
  \mathcal{K}_2(t)=Ce^{C(1+\e^4)\eta^2_5(t)} \zkh{\eta_5(t)+\eta^2_5(t)}.
\end{equation*}Here~$C$~is a positive constant that does not depend on~$\e$.
\end{proposition}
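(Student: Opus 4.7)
The plan is to perform a first-order energy estimate by testing the three evolution equations (\ref{fl:Ve0}), (\ref{fl:dnw}), (\ref{fl:nge}) in $L^2(\O)$ against $-\la V_\e$, $-\la W_\e$, and $-\la\g_\e$ respectively, and summing. Two algebraic cancellations are crucial: after integration by parts, the pressure contribution becomes $\oo{P_\e\la(\nh\d V_\e+\p_z W_\e)}=0$ by (\ref{fl:zWe}); and the cross terms coupling $W_\e$ and $\g_\e$ produce $\oo{\n\g_\e\d\n W_\e}-\oo{\n W_\e\d\n\g_\e}=0$. This yields the energy identity
\begin{equation*}
\frac{1}{2}\frac{d}{dt}\norm{\n(V_\e,\e W_\e,\g_\e)}^2_2+\norm{\la(V_\e,\e W_\e,\g_\e)}^2_2=\sum_{j=1}^{4}I_j,
\end{equation*}
where the right-hand side groups the trilinear terms (of types perturbation-background, background-perturbation, and perturbation-perturbation) together with the $\e^2$-weighted inhomogeneity $-\e^2\oo{(\p_t w-\la w+u\d\n w)\la W_\e}$ generated by inserting the primitive-equation background into the scaled Boussinesq system.

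Each $I_j$ is then estimated using the tools already deployed in Section 3: H\"older and Young inequalities, the anisotropic lemmas from the Appendix (as for the $D_2$, $D_3$ treatments there), and the representation $W_\e=-\dk{\nh\d V_\e}$ coming from the divergence-free condition. The background-factor terms are controlled through the $H^2$ regularity of $(v,\r)$ supplied by Proposition~\ref{po:e5t}, yielding contributions of schematic form $C\eta_5(t)\,y(t)+\tfrac{1}{16}\norm{\la(V_\e,\e W_\e,\g_\e)}^2_2$ with $y(t):=\norm{\n(V_\e,\e W_\e,\g_\e)}^2_2$. The perturbation-perturbation trilinears, e.g.\ $\oo{(U_\e\d\n)V_\e\d\la V_\e}$, after anisotropic interpolation give a bound $C\norm{\n U_\e}_2\norm{\la V_\e}^2_2$ plus lower-order pieces; this is precisely where the smallness hypothesis $\norm{\n(V_\e,\g_\e)}^2_2+\e^2\norm{\n W_\e}^2_2\le\beta_0^2$ is used, since it forces the prefactor to satisfy $\norm{\n U_\e}_2\le C\beta_0$, so that by choosing $\beta_0$ sufficiently small (independent of $\e$) the Laplacian term is absorbable into the left-hand side. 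The forcing term is handled by Cauchy--Schwarz and Young, producing $C\e^2(\norm{\p_t w}^2_2+\norm{\la w}^2_2+\norm{u\d\n w}^2_2)$ plus an absorbable fraction of $\e^2\norm{\la W_\e}^2_2$, whose time integral is $O(\e^2)$ thanks to Proposition~\ref{po:e5t}.

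After absorption one arrives at a differential inequality $y'(t)+\tfrac{1}{2}\norm{\la(V_\e,\e W_\e,\g_\e)}^2_2\le F(t)y(t)+\e^2 G(t)$, with $F,G\in L^1([0,T])$ depending on $\eta_4$ and $\eta_5$ but not on $\e$. Since $v_\e(0)=v(0)=v_0$, $\r_\e(0)=\r(0)=\r_0$, and $w_\e(0)=w(0)=-\dk{\nh\d v_0}$ by (\ref{eq:yxi}), we have $V_\e(0)=W_\e(0)=\g_\e(0)=0$, so $y(0)=0$; Gronwall's lemma then delivers the asserted bound with $\mathcal{K}_2(t)=Ce^{C(1+\e^4)\eta_5^2(t)}[\eta_5(t)+\eta_5^2(t)]$.

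The main obstacle is the careful bookkeeping of $\e$-powers in the $W_\e$-equation nonlinearities. Terms such as $\e^2\oo{(U_\e\d\n)W_\e\d(-\la W_\e)}$ are delicate because $W_\e$ only carries an $\e$-weighted $H^1$ norm; naive estimation would lose a factor of $\e$ and destroy the $\e^2$ convergence rate. The remedy is to substitute $W_\e=-\dk{\nh\d V_\e}$ to trade vertical derivatives of $W_\e$ for horizontal derivatives of $V_\e$ wherever possible, and to combine the two smallness bounds $\norm{\n V_\e}_2\le\beta_0$ and $\e\norm{\n W_\e}_2\le\beta_0$ from the hypothesis to guarantee that every higher-order piece either carries a full $\e^2$ factor or is absorbed into the Laplacian dissipation.
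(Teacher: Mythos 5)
Your proposal follows essentially the same route as the paper: testing (\ref{fl:Ve0})--(\ref{fl:nge}) with $-\la V_\e$, $-\la W_\e$, $-\la\g_\e$, using the pressure and $W_\e$--$\g_\e$ cancellations, estimating the trilinear and $\e^2$-forcing terms via the anisotropic Ladyzhenskaya lemma together with Proposition~\ref{po:e5t}, absorbing the perturbation-perturbation terms through the smallness hypothesis on $\beta_0$, and closing with Gronwall from zero initial data. The argument and the resulting $\mathcal{K}_2(t)$ match the paper's proof.
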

\begin{proof}[Proof.]
Multiplying the first three equation~in system~(\ref{fl:Ve0})-(\ref{fl:zWe})~by~$-\la V_\e$,~$-\la W_\e$~and~$-\la \g_\e$~respectively,~then integrating over~$\O\times(0,r)$,~and finally integrating by parts lead to
\begin{flalign}
  \frac{1}{2}\frac{d}{dt}&\xkh{\norm{\n (V_\e,\g_\e)}^2_2+\e^2\norm{\n W_\e}^2_2}
  +\norm{\la(V_\e,\g_\e)}^2_2+\e^2\norm{\la W_\e}^2_2\nonumber\\
  &=\oo{(U_\e \d \n \g_\e+u \d \n \g_\e+U_\e \d \n \r)\la \g_\e}\nonumber\\
  &\quad+\e^2\oo{\xkh{\p_t w-\la w+u \d \n w}\la W_\e}\nonumber\\
  &\quad+\e^2\oo{\xkh{U_\e \d \n W_\e+u \d \n W_\e+U_\e \d \n w}\la W_\e}\nonumber\\
  &\quad+\oo{\zkh{(U_\e \d \n)V_\e+(u \d \n)V_\e+(U_\e \d \n)v} \d \la V_\e}\nonumber\\
  &=:G_1+G_2+G_3+G_4.\label{fl:Veg}
\end{flalign}For the first integral term~$G_1$~on the right-hand side of~(\ref{fl:Veg}),~we apply the Lemma~\ref{le:psi} and Poincar\'{e} inequality and Young inequality to obtain
\begin{flalign}
  G_1:&=\oo{(U_\e \d \n \g_\e+u \d \n \g_\e+U_\e \d \n \r)\la \g_\e}\nonumber\\
  &\leq\frac{5}{64}\xkh{\norm{\la V_\e}^2_2+\norm{\la \g_\e}^2_2}+C\xkh{\norm{\n (V_\e,\g_\e)}^2_2+\e^2\norm{\n W_\e}^2_2}\nonumber\\
  &\quad\times\zkh{\xkh{\norm{\la V_\e}^2_2+\norm{\la \g_\e}^2_2}+\norm{\la \r}^2_2 \norm{\n \la \r}^2_2+\norm{\la v}^2_2 \norm{\n \la v}^2_2}.\label{fl:221}
\end{flalign}Thanks to the H\"{o}lder inequality, Lemma~\ref{le:psi} and Young inequality, we have
\begin{flalign}
  G_2:&=\e^2\oo{\xkh{\p_t w-\la w+u \d \n w}\la W_\e}\nonumber\\
  &\leq \e^2C\xkh{\norm{\la v}^2_2 \norm{\n \la v}^2_2+\norm{\n \p_t v}^2_2+\norm{\n \la v}^2_2}+\frac{5}{64}\e^2\norm{\la W_\e}^2_2.\label{fl:law}
\end{flalign}where the divergence-free condition is used. Using the similar method as the integral term~$G_1$~on the right-hand side of~(\ref{fl:Veg}),~the integral term~$G_3$ and~$G_4$~can be bounded as
\begin{flalign}
  G_3:&=\e^2\oo{\xkh{U_\e \d \n W_\e+u \d \n W_\e+U_\e \d \n w}\la W_\e}\nonumber\\
  &\leq\frac{5}{64}\xkh{\norm{\la V_\e}^2_2+\e^2\norm{\la W_\e}^2_2}+C\xkh{\norm{\n (V_\e,\g_\e)}^2_2+\e^2\norm{\n W_\e}^2_2}\nonumber\\
  &\quad\times\zkh{\xkh{\norm{\la V_\e}^2_2+\e^2\norm{\la W_\e}^2_2}+(1+\e^4)\norm{\la v}^2_2 \norm{\n \la v}^2_2}\label{fl:laV}
\end{flalign}and
\begin{flalign}
  G_4:&=\oo{\zkh{(U_\e \d \n)V_\e+(u \d \n)V_\e+(U_\e \d \n)v} \d \la V_\e}\nonumber\\
  &\leq C\xkh{\norm{\n V_\e}^2_2 \norm{\la V_\e}^2_2+\norm{\la v}^2_2 \norm{\n \la v}^2_2 \norm{\n V_\e}^2_2}+\frac{5}{64}\norm{\la V_\e}^2_2\nonumber\\
  &\leq C\norm{\n V_\e}^2_2\xkh{\norm{\la V_\e}^2_2+\norm{\la v}^2_2 \norm{\n \la v}^2_2 }+\frac{5}{64}\norm{\la V_\e}^2_2,\label{fl:nla}
\end{flalign}respectively.

Combining the estimates for~(\ref{fl:221}),~(\ref{fl:law}),~(\ref{fl:laV}),~and~(\ref{fl:nla}),~we obtain
\begin{flalign}
  \frac{1}{2}\frac{d}{dt}&\xkh{\norm{\n (V_\e,\g_\e)}^2_2+\e^2\norm{\n W_\e}^2_2}+\frac{11}{16}\xkh{\norm{\la(V_\e,\g_\e)}^2_2+\e^2\norm{\la W_\e}^2_2}\nonumber\\
  &\leq C_\delta\xkh{\norm{\n (V_\e,\g_\e)}^2_2+\e^2\norm{\n W_\e}^2_2}\Big[\xkh{\norm{\la (V_\e,\g_\e)}^2_2+\e^2\norm{\la W_\e}^2_2}\nonumber\\
  &\quad+\norm{\la \r}^2_2 \norm{\n \la \r}^2_2+(1+\e^4)\norm{\la v}^2_2 \norm{\n \la v}^2_2\Big]\nonumber\\
  &\quad+\e^2 C_\delta\xkh{\norm{\la v}^2_2 \norm{\n \la v}^2_2+\norm{\n \p_t v}^2_2+\norm{\n \la v}^2_2}.\label{fl:ddt}
\end{flalign}Using the assumption given by the proposition
\begin{equation*}
  \sup_{0 \leq s \leq t}\xkh{\norm{\n (V_\e,\g_\e)}^2_2+\e^2\norm{\n W_\e}^2_2}(s) \leq \beta^2_0,
\end{equation*}and choosing~$\beta_0=\sqrt{\frac{3}{16C_\delta}}$,~it deduces from inequality~(\ref{fl:ddt})~that
\begin{flalign*}
  \frac{d}{dt}&\xkh{\norm{\n (V_\e,\g_\e)}^2_2+\e^2\norm{\n W_\e}^2_2}+\xkh{\norm{\la(V_\e,\g_\e)}^2_2+\e^2\norm{\la W_\e}^2_2}\\
  &\leq C_\delta\zkh{\norm{\la \r}^2_2 \norm{\n \la \r}^2_2+(1+\e^4)\norm{\la v}^2_2 \norm{\n \la v}^2_2}
  \xkh{\norm{\n (V_\e,\g_\e)}^2_2+\e^2\norm{\n W_\e}^2_2}\\
  &\quad+\e^2 C_\delta\xkh{\norm{\la v}^2_2 \norm{\n \la v}^2_2+\norm{\n \p_t v}^2_2+\norm{\n \la v}^2_2}.
\end{flalign*}Applying the Gronwall inequality to the above inequality,~it follows from~Propositin~\ref{po:e5t}~that
\begin{flalign*}
  &\xkh{\norm{\n (V_\e,\g_\e)}^2_2+\e^2\norm{\n W_\e}^2_2}(t)+\ds{\xkh{\norm{\la(V_\e,\g_\e)}^2_2+\e^2\norm{\la W_\e}^2_2}}\\
  &\qquad\leq \e^2 C_\delta\exp\dkh{C_\delta\ds{\zkh{\norm{\la \r}^2_2 \norm{\n \la \r}^2_2+(1+\e^4)\norm{\la v}^2_2 \norm{\n \la v}^2_2}}}\\
  &\qquad\quad\times\ds{\xkh{\norm{\la v}^2_2 \norm{\n \la v}^2_2+\norm{\n \p_t v}^2_2+\norm{\n \la v}^2_2}}\\
  &\qquad\leq \e^2 C_\delta e^{C_\delta(1+\e^4)\eta^2_5(t)} \zkh{\eta_5(t)+\eta^2_5(t)},
\end{flalign*}note that the fact that~$(V_\e,W_\e,\g_\e)|_{t=0}=0$.~This completes the proof.
\end{proof}

\begin{proposition}\label{po:I12}
Let $T^*_\e$ be the maximal existence time of the strong solution~$(v_\e,w_\e,\r_\e)$~to the system~(\ref{eq:ptv})~corresponding to boundary and initial conditions~(\ref{ga:are})-(\ref{ga:vew})~and symmetry condition~(\ref{eq:eve}).~Then,~for any~$T>0$,~there exists a small positive constant~$\e(T)=\frac{3\beta_0}{4\sqrt{\widetilde{\mathcal{K}_2}(T)}}$~such that~$T^*_\e>T$~provided that~$\e \in (0,\e(T))$.~Furthermore,~the system~(\ref{fl:Ve0})-(\ref{fl:zWe})~has the following estimate
\begin{equation*}
  \sup_{0 \leq t \leq T}\xkh{\norm{(V_\e,\e W_\e,\g_\e)}^2_{H^1}}(t)
  +\int^{T}_0{\norm{\n(V_\e,\e W_\e,\g_\e)}^2_{H^1}}dt \leq \e^2\xkh{\widetilde{\mathcal{K}_1}(T)+\widetilde{\mathcal{K}_2}(T)},
\end{equation*}where
\begin{equation*}
  \widetilde{\mathcal{K}_1}(T)=Ce^{C\eta^2_4(T)}\zkh{\eta_4(T)+\eta^2_4(T)+\xkh{\norm{v_0}^2_2+\norm{w_0}^2_2+\norm{\r_0}^2_2}^2},
\end{equation*}and
\begin{equation*}
  \widetilde{\mathcal{K}_2}(T)=C'e^{C'\eta^2_5(T)} \zkh{\eta_5(T)+\eta^2_5(T)}.
\end{equation*}
Here both~$C$~and~$C'$~are positive constants that do not depend on $\e$.
\end{proposition}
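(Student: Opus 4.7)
The plan is to combine the zeroth-order bound of Proposition~\ref{po:I1T} with the conditional first-order bound of Proposition~\ref{po:I2t} via a continuity (bootstrap) argument, and then to invoke the blow-up criterion for the local strong solution of~(\ref{eq:ptv}) in order to propagate this solution past time $T$ whenever $\e$ is small enough. The outputs are the lower bound $T^*_\e>T$ and the full $H^1$ estimate on the difference.

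First I would set
\begin{equation*}
  T^{**}_\e:=\sup\dkh{t\in[0,T\wedge T^*_\e)\,:\,\sup_{0\le s\le t}\xkh{\norm{\n(V_\e,\g_\e)}^2_2+\e^2\norm{\n W_\e}^2_2}(s)\le\beta_0^2},
\end{equation*}
where $\beta_0$ is the constant furnished by Proposition~\ref{po:I2t}. Since $(V_\e,W_\e,\g_\e)|_{t=0}=0$ and the gradient norms are continuous in $t$ on $[0,T^*_\e)$, we have $T^{**}_\e>0$. Throughout $[0,T^{**}_\e)$ the smallness hypothesis of Proposition~\ref{po:I2t} holds, so that proposition applies and yields $\norm{\n(V_\e,\e W_\e,\g_\e)}^2_2(t)\le\e^2\mathcal{K}_2(t)\le\e^2\widetilde{\mathcal{K}_2}(T)$ for all $t\in[0,T^{**}_\e)$.

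Next I would close the bootstrap using the choice $\e(T)=\frac{3\beta_0}{4\sqrt{\widetilde{\mathcal{K}_2}(T)}}$ already built into the statement. For $\e\in(0,\e(T))$, the above bound gives $\e^2\widetilde{\mathcal{K}_2}(T)\le(3\beta_0/4)^2<\beta_0^2$, so the bootstrap threshold is strictly avoided and a standard continuity argument forces $T^{**}_\e=T\wedge T^*_\e$. To upgrade this to $T^*_\e>T$, I would argue by contradiction: suppose $T^*_\e\le T$. The uniform $H^1$ control on $(V_\e,W_\e,\g_\e)$ just obtained, together with the $L^\infty(0,T;H^2(\O))$ control on $(v,w,\r)$ provided by Theorem~\ref{th:vrt} and Proposition~\ref{po:e5t}, gives a uniform $H^1$ bound on $(v_\e,w_\e,\r_\e)=(v,w,\r)+(V_\e,W_\e,\g_\e)$ up to $T^*_\e$, contradicting the standard blow-up criterion for~(\ref{eq:ptv}) from the Lions--Temam--Wang local existence theory. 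Hence $T^*_\e>T$ and the first-order estimate holds on all of $[0,T]$.

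Finally, adding this first-order bound to the basic energy estimate of Proposition~\ref{po:I1T} evaluated at $t=T$ produces the full $H^1$ estimate with constant $\widetilde{\mathcal{K}_1}(T)+\widetilde{\mathcal{K}_2}(T)$, from which the convergences claimed in Theorem~\ref{th:I12} follow immediately. The main obstacle is precisely the last step: converting the conditional first-order estimate of Proposition~\ref{po:I2t} into an unconditional one on $[0,T]$ while simultaneously preventing finite-time blow-up of $(v_\e,w_\e,\r_\e)$. The strict gap between $\e^2\widetilde{\mathcal{K}_2}(T)$ and $\beta_0^2$ built into the definition of $\e(T)$, combined with the $H^2$ regularity of the limit system, are exactly what make this bootstrap and continuation go through.
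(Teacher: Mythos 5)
Your proposal is correct and follows exactly the intended argument: the paper omits this proof (referring to Proposition 4.3 of Pu--Zhou \cite{pz2021}), which is precisely the continuity/bootstrap closure of the conditional estimate in Proposition~\ref{po:I2t} using the gap $\e^2\widetilde{\mathcal{K}_2}(T)\le 9\beta_0^2/16<\beta_0^2$, followed by the blow-up-criterion contradiction (valid for each fixed $\e$, since the $H^1$ bound on $\e W_\e$ yields an $\e$-dependent $H^1$ bound on $W_\e$) and the addition of Proposition~\ref{po:I1T}. No gaps beyond routine details such as absorbing the factor $(1+\e^4)$ into $C'$ for $\e\le 1$.
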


The proof of Proposition~\ref{po:I12}~is similar to that of Proposition 4.3 given in Pu-Zhou\cite{pz2021}~and so is omitted.~Based on Proposition~\ref{po:I12},~we give the proof of Theorem~\ref{th:I12}.

\begin{proof}[Proof of Theorem~\ref{th:I12}.]
For any~$T>0$,~thanks to Proposition~\ref{po:I12},~there exists a small positive constant~$\e(T)=\frac{3\beta_0}{4\sqrt{\widetilde{\mathcal{K}_2}(T)}}$~such that~$T^*_\e>T$~provided that~$\e \in (0,\e(T))$,~which implies that the system~(\ref{eq:ptv})~corresponding to boundary and initial conditions~(\ref{ga:are})-(\ref{ga:vew})~and symmetry condition~(\ref{eq:eve})~has a unique strong solution~$(v_\e,w_\e,\r_\e)$~on the time interval~$[0,T]$ for all $\e$ $\in$ $(0,\e(T))$.~Moreover,~the following estimate holds
\begin{flalign*}
  \sup_{0 \leq t \leq T}\xkh{\norm{(V_\e,\e W_\e,\g_\e)}^2_{H^1}}(t)
  +\int^{T}_0{\norm{\n(V_\e,\e W_\e,\g_\e)}^2_{H^1}}dt &\leq \e^2\xkh{\widetilde{\mathcal{K}_1}(T)+\widetilde{\mathcal{K}_2}(T)}\\
  &=:\e^2\widetilde{\mathcal{K}_3}(T),
\end{flalign*}where~$\widetilde{\mathcal{K}_3}(t)$~is a nonnegative continuously increasing function that does not depend on~$\e$.~Finally,~it is clear that the strong convergences stated in Theorem~\ref{th:I12}~are the direct consequences of the above estimate.~The theorem is thus proved.
\end{proof}

\section*{Acknowledgments}
The work of X. Pu was supported in part by the National Natural Science Foundation of China (No. 11871172) and the Natural Science Foundation of Guangdong Province of China (No. 2019A1515012000). The work of W. Zhou was supported by the Innovation Research for the Postgraduates of Guangzhou University (No. 2021GDJC-D09).

\section*{Appendix}
\setcounter{lemma}{0}
\renewcommand{\thelemma}{A.\arabic{lemma}}

In this appendix,~we present some Ladyzhenskaya-type inequalities in three dimensions for a class of integrals,~which are frequently used throughout the paper.
\begin{lemma}(\!\!\cite{ct2003})\label{le:phi}
The following inequalities hold
\begin{flalign*}
  \mm{&\xkh{\dz{\varphi(x,y,z)}} \xkh{\dz{\psi(x,y,z)\phi(x,y,z)}}}\\
  &\leq C\norm{\varphi}^{1/2}_2 \xkh{\norm{\varphi}^{1/2}_2+\norm{\nh\varphi}^{1/2}_2}
  \norm{\psi}^{1/2}_2 \xkh{\norm{\psi}^{1/2}_2+\norm{\nh\psi}^{1/2}_2} \norm{\phi}_2,
\end{flalign*}
\begin{flalign*}
  \mm{&\xkh{\dz{\varphi(x,y,z)}} \xkh{\dz{\psi(x,y,z)\phi(x,y,z)}}}\\
  &\leq C\norm{\psi}^{1/2}_2 \xkh{\norm{\psi}^{1/2}_2+\norm{\nh\psi}^{1/2}_2}
  \norm{\phi}^{1/2}_2 \xkh{\norm{\phi}^{1/2}_2+\norm{\nh\phi}^{1/2}_2} \norm{\varphi}_2,
\end{flalign*}for every~$\varphi,\psi,\phi$~such that the right-hand sides make sense and are finite,~where~$C$~is a positive constant.
\end{lemma}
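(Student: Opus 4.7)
The plan is to reduce the integrals over $\O = M \times (-1,1)$ to two-dimensional integrals on $M$ by vertical integration, and then combine a Cauchy--Schwarz in $z$ with the two-dimensional Ladyzhenskaya inequality
\[
\norm{u}_{L^4(M)}^2 \leq C \norm{u}_{L^2(M)} \norm{u}_{H^1(M)}, \qquad u: M \to \mathbb{R},
\]
which is available on the bounded planar domain $M$. The two displayed inequalities differ only in which of the three functions is estimated in plain $L^2(\O)$, and correspondingly whether the 2D H\"older step on $M$ is performed in the form $L^4 \cdot L^4 \cdot L^2$ or $L^2 \cdot L^2$.

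For the first inequality, the plan is to introduce the 2D auxiliary functions
\[
F(x,y) := \dz{\varphi(x,y,z)}, \quad \Psi(x,y) := \xkh{\dz{\psi^2(x,y,z)}}^{1/2}, \quad \Phi(x,y) := \xkh{\dz{\phi^2(x,y,z)}}^{1/2}.
\]
The 1D Cauchy--Schwarz gives $|\dz{\psi \phi}| \leq \Psi \Phi$, so the integrand on the left-hand side is pointwise bounded by $|F|\Psi\Phi$. H\"older on $M$ with exponents $(4,4,2)$ then yields $\mm{|F|\Psi\Phi} \leq \norm{F}_{L^4(M)} \norm{\Psi}_{L^4(M)} \norm{\Phi}_{L^2(M)}$, and $\norm{\Phi}_{L^2(M)} = \norm{\phi}_2$ by Fubini. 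I would then apply 2D Ladyzhenskaya to the two $L^4(M)$ norms, using the straightforward bounds $\norm{F}_{L^2(M)}^2 \leq 2\norm{\varphi}_2^2$ and $\norm{\nh F}_{L^2(M)}^2 \leq 2\norm{\nh \varphi}_2^2$ (Cauchy--Schwarz in $z$); and for $\Psi$, noting $\norm{\Psi}_{L^2(M)} = \norm{\psi}_2$ while the identity $\Psi \, \nh \Psi = \dz{\psi \, \nh \psi}$ together with Cauchy--Schwarz yields $|\nh \Psi| \leq (\dz{|\nh \psi|^2})^{1/2}$, so that $\norm{\nh \Psi}_{L^2(M)} \leq \norm{\nh \psi}_2$. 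Combining these with the elementary bound $(a+b)^{1/2} \leq a^{1/2} + b^{1/2}$ for $a,b \geq 0$ produces the factors as claimed.

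For the second inequality, the function $\varphi$ is now the one that must sit in $L^2(\O)$, so I would instead apply 2D H\"older in the simpler form $L^2(M) \cdot L^2(M)$, namely $\mm{F \cdot G} \leq \norm{F}_{L^2(M)} \norm{G}_{L^2(M)}$ with $G(x,y) := \dz{\psi \phi}$. The factor $\norm{F}_{L^2(M)} \leq \sqrt{2}\,\norm{\varphi}_2$ (Cauchy--Schwarz in $z$) produces the required $\norm{\varphi}_2$. For the other factor, $|G| \leq \Psi \Phi$ pointwise combined with 2D H\"older gives $\norm{G}_{L^2(M)} \leq \norm{\Psi}_{L^4(M)}\norm{\Phi}_{L^4(M)}$, after which 2D Ladyzhenskaya applied to both $\Psi$ and $\Phi$, exactly as in the first part, concludes the estimate.

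The only delicate technicality is the pointwise gradient bound for $\Psi$ at points where $\Psi = 0$. The plan here is a standard regularization: replace $\Psi$ by $\Psi_\delta := (\delta + \dz{\psi^2})^{1/2}$, derive the estimates uniformly in $\delta > 0$, and pass to the limit $\delta \to 0^+$. Apart from this small point, the argument is essentially a careful assembly of three standard ingredients, namely Cauchy--Schwarz in $z$, 2D H\"older on $M$, and the 2D Ladyzhenskaya inequality, with the only creativity being the choice of which function to pair with which exponent in order to match the two asymmetric right-hand sides of the lemma.
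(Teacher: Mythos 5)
Your proof is correct, and it is essentially the canonical argument for this lemma: the paper itself gives no proof, deferring to Cao--Titi \cite{ct2003}, where the inequality is obtained by exactly your route --- Cauchy--Schwarz in $z$, H\"older on $M$ with exponents $(4,4,2)$ or $(2,2)$, and the two-dimensional Ladyzhenskaya interpolation applied to $\int_{-1}^{1}\varphi\,dz$ and $\bigl(\int_{-1}^{1}\psi^{2}\,dz\bigr)^{1/2}$. Your treatment of the gradient of $\bigl(\int_{-1}^{1}\psi^{2}\,dz\bigr)^{1/2}$ via the regularization $\Psi_\delta$ correctly handles the only delicate point.
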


\begin{lemma}(\!\!\cite{lt2019})\label{le:psi}
Let~$\varphi=(\varphi_1,\varphi_2,\varphi_3)$,~$\psi$~and~$\phi$~be periodic functions in~$\O$.~Denote by~$\varphi_h=(\varphi_1,\varphi_2)$~the horizontal components of the function~$\varphi$.~There exists a positive constant~$C$~such that the following estimate holds
\begin{equation*}
  \abs{\oo{\xkh{\varphi \d \n\psi}\phi}} \leq C \norm{\n \varphi_h}^{1/2}_2 \norm{\la \varphi_h}^{1/2}_2
  \norm{\n \psi}^{1/2}_2 \norm{\la \psi}^{1/2}_2 \norm{\phi}_2,
\end{equation*}provided that~$\varphi \in H^1(\O)$,~with~$\n \d \varphi=0$~in~$\O$,~$\oo{\varphi}=0$,~and~$\varphi_3|_{z=0}=0$,~$\n\psi \in H^1(\O)$~and~$\phi \in L^2(\O)$.
\end{lemma}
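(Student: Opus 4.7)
My plan is to prove the lemma by splitting the trilinear integral into its horizontal and vertical parts,
\begin{equation*}
  \oo{(\varphi \d \n\psi)\phi} = \oo{(\varphi_h \d \nh \psi)\phi} + \oo{\varphi_3(\p_z \psi)\phi} =: J_h + J_v,
\end{equation*}
and to estimate the two pieces by distinctly different means. The crucial point is that the bound we must produce only sees $\varphi_h$ (not $\varphi_3$) on the right-hand side, so the divergence-free hypothesis and the boundary trace $\varphi_3|_{z=0}=0$ must be used to eliminate $\varphi_3$ entirely in the vertical piece.

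For the horizontal piece $J_h$, the plan is to apply an anisotropic Hölder-type estimate pointwise in $(x,y)$: bound $\int_{-1}^{1}|\varphi_h||\nh\psi||\phi|\,dz$ by $\|\varphi_h(x,y,\cdot)\|_{L^\infty_z}\,\|\nh\psi(x,y,\cdot)\|_{L^2_z}\,\|\phi(x,y,\cdot)\|_{L^2_z}$, use the one-dimensional Agmon interpolation $\|\varphi_h(x,y,\cdot)\|_{L^\infty_z}^2 \le C\|\varphi_h(x,y,\cdot)\|_{L^2_z}\|\p_z\varphi_h(x,y,\cdot)\|_{L^2_z}$ (valid since $\varphi_h$ is periodic and has zero mean), then integrate over $M$, use two-dimensional Gagliardo–Nirenberg on the $L^\infty_z$-in-$xy$ norm, and close with Cauchy–Schwarz in $(x,y)$ against $\|\phi\|_2$. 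This produces a bound with $\|\nh\varphi_h\|_2^{1/2}\|\n\p_z\varphi_h\|_2^{1/2}$ (or $\|\la\varphi_h\|_2$-type control after Poincaré), $\|\nh\psi\|_2^{1/2}\|\la_h\psi\|_2^{1/2}$, and $\|\phi\|_2$, which is within the asserted form.

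For the vertical piece $J_v$, the idea is to write
\begin{equation*}
  \varphi_3(x,y,z) = -\dk{\nh \d \varphi_h(x,y,\xi)},
\end{equation*}
obtained by integrating $\p_z\varphi_3 = -\nh\d\varphi_h$ from $0$ to $z$ and using $\varphi_3|_{z=0}=0$. Then
\begin{equation*}
  |J_v| \le \mm{\xkh{\dz{|\nh \d \varphi_h|}}\xkh{\dz{|\p_z\psi||\phi|}}},
\end{equation*}
which is precisely in the form handled by Lemma~\ref{le:phi}. Applying that lemma with the roles $\varphi \to \nh\d\varphi_h$, $\psi \to \p_z\psi$, $\phi \to \phi$, and then invoking Poincaré (justified by the zero mean and periodicity of $\varphi$, and periodicity of $\psi$) to absorb lower-order norms into $\|\la\varphi_h\|_2$ and $\|\la\psi\|_2$, yields the desired $\|\n\varphi_h\|_2^{1/2}\|\la\varphi_h\|_2^{1/2}\|\n\psi\|_2^{1/2}\|\la\psi\|_2^{1/2}\|\phi\|_2$ bound.

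The main obstacle is the horizontal piece $J_h$: unlike $J_v$, it does not reduce directly to the template of Lemma~\ref{le:phi}, so one must carry out the Agmon/Gagliardo–Nirenberg reduction by hand and then interpolate to produce exactly the mixed half-power factors $\|\n\varphi_h\|_2^{1/2}\|\la\varphi_h\|_2^{1/2}$ rather than, say, a bound with $\|\n\p_z\varphi_h\|_2$ alone. This requires using the equivalence $\|\la\varphi_h\|_2 \sim \|\n^2\varphi_h\|_2$ for periodic functions and the Poincaré inequalities repeatedly, bookkeeping carefully so that no norm of $\varphi_3$ (which is not controlled by the right-hand side) survives.
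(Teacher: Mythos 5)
The paper does not prove this lemma at all: it is quoted verbatim from Li--Titi \cite{lt2019} in the Appendix, so there is no in-paper argument to compare against. Your strategy is the standard one (and essentially the one in the cited reference): split the trilinear form into the horizontal part $J_h$ and the vertical part $J_v$, use $\n\d\varphi=0$ together with $\varphi_3|_{z=0}=0$ to write $\varphi_3=-\dk{\nh\d\varphi_h(x,y,\xi)}$, bound $J_v$ by the template of Lemma~\ref{le:phi} applied to $\nh\d\varphi_h$, $\p_z\psi$, $\phi$, and treat $J_h$ by anisotropic H\"older/Agmon/Ladyzhenskaya interpolation, closing everything with Poincar\'e (legitimate here since $\oo{\varphi}=0$ and all functions are periodic, so $\norm{\n\varphi_h}_2\leq C\norm{\la\varphi_h}_2$ and $\norm{\n\p_z\varphi_h}_2\leq C\norm{\la\varphi_h}_2$). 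The vertical part of your argument is correct and complete. The one genuine slip is in $J_h$: you invoke the clean one-dimensional Agmon inequality $\norm{\varphi_h(x,y,\cdot)}_{L^\infty_z}^2\leq C\norm{\varphi_h(x,y,\cdot)}_{L^2_z}\norm{\p_z\varphi_h(x,y,\cdot)}_{L^2_z}$ on the grounds that $\varphi_h$ ``has zero mean,'' but the hypothesis $\oo{\varphi}=0$ gives zero mean over all of $\O$, not zero mean in $z$ for each fixed $(x,y)$; as stated the inequality fails for a function constant in $z$. This is easily repaired by using $\norm{f}_{L^\infty_z}^2\leq C\norm{f}_{L^2_z}\xkh{\norm{f}_{L^2_z}+\norm{\p_z f}_{L^2_z}}$ and absorbing the extra lower-order term at the end via the global Poincar\'e inequality, so the proof goes through; one could also shortcut $J_h$ entirely by the H\"older chain $L^6$--$L^3$--$L^2$ together with $\norm{\varphi_h}_{L^6}\leq C\norm{\n\varphi_h}_2$ and $\norm{\nh\psi}_{L^3}\leq C\norm{\n\psi}_2^{1/2}\norm{\la\psi}_2^{1/2}$, which avoids the anisotropic machinery altogether.
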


\newpage
\small

\end{document}